\title{Random subgraphs make identification affordable}
\author{Florent Foucaud, Guillem Perarnau and Oriol Serra}
\date{\today}
\theoremstyle{plain}
\newtheorem{theorem}{Theorem}
\newtheorem{lemma}[theorem]{Lemma}
\newtheorem{proposition}[theorem]{Proposition}
\newtheorem{corollary}[theorem]{Corollary}
\newtheorem{observation}[theorem]{Observation}
\newtheorem{question}[theorem]{Question}
\theoremstyle{definition}
\newtheorem{definition}[theorem]{Definition}
\theoremstyle{definition}
\newtheorem*{claim}{Claim}
\newcommand{\rst}[1]{\ensuremath{{\mathbin\upharpoonright}%
\raise-.5ex\hbox{$#1$}}} 
\newcommand{\E}{\mathbb{E}}
\renewcommand{\P}{\mathcal{P}}
\newcommand{\eps}{\varepsilon}
\newcommand{\M}{\gamma^{\text{\tiny{ID}}}}
\newcommand{\Poly}{\mbox{Poly}}
\newcommand{\Sym}{\oplus}
\begin{document}

\pagenumbering{arabic}

\setcounter{section}{0}

\maketitle

\onehalfspace

\begin{abstract}
An identifying code of a graph is a dominating set which uniquely
determines all the vertices by their neighborhood within the code.
Whereas graphs with large minimum degree have small domination number,
this is not the case for the identifying code number (the size of a
smallest identifying code), which indeed is not even a monotone
parameter with respect to graph inclusion.

We show that every graph $G$ with $n$ vertices, maximum degree
$\Delta=\omega(1)$ and minimum degree
$\delta\geq c\log{\Delta}$, for some constant $c>0$, contains a large spanning
subgraph which admits an identifying code with size
$O\left(\frac{n\log{\Delta}}{\delta}\right)$. In particular, if
$\delta=\Theta(n)$, then $G$ has a dense spanning subgraph with
identifying code $O\left(\log n\right)$, namely, of asymptotically
optimal size. The subgraph we build is created using a probabilistic
approach, and we use an interplay of various random methods to analyze
it. Moreover we show that the result is essentially best possible,
both in terms of the number of deleted edges and the size of the
identifying code. 
\end{abstract}

\section{Introduction}
Consider any graph parameter that is not monotone with respect to
graph inclusion. Given a graph $G$, a natural problem in this context
is to study the minimum value of this parameter over all spanning
subgraphs of $G$. In particular, how many edge deletions are
sufficient in order to obtain from $G$ a graph with near-optimal value
of the parameter? Herein, we use random methods to study this question
with respect to the identifying code number of a graph, a well-studied
non-monotone parameter. An identifying code of graph $G$ is a set $C$ of
vertices which is a dominating set, and such that the closed
neighborhood within $C$ of each vertex $v$ uniquely determines
$v$.

Identifying codes were introduced in 1998 in~\cite{KCL98} and have
been studied extensively in the literature since then. We refer
to~\cite{biblio} for an on-line bibliography. One of the interests of
this notion lies in their applications to the location of threats in
facilities~\cite{UTS04} and error-detection in computer
networks~\cite{KCL98}. One can also mention applications to
routing~\cite{LTCS07}, to bio-informatics~\cite{HKSZ06} and to
measuring the first-order logical complexity of
graphs~\cite{KPSV04}. Let us also mention that identifying codes are
special cases of the more general notion of \emph{test covers} of
hypergraphs, see e.g.~\cite{DHHHLRS03,MS85} (test covers are also the
implicit object of Bondy's celebrated theorem on \emph{induced
  subsets}~\cite{B72}).

Let $G$ be a simple, undirected and finite graph. 
The \emph{open neighborhood} of a vertex $v$
in $G$ is the set of vertices in $V(G)$ that are adjacent to it, and
will be denoted $N_G(v)$.  The \emph{closed neighborhood} of a vertex
$v$ in $G$ is defined as $N_G[v]=N_G(v)\cup \{v\}$.
The degree of a vertex $u\in V(G)$, is defined as $d(v)= |N_G(v)|$.
Similarly, we define, for a set $S\subseteq V(G)$,
$N_G(S)=\bigcup_{v\in S}N_G(v)$ and $N_G[S]=\bigcup_{v\in S}N_G[v]$.
If two distinct vertices $u,v$
are such that $N[u]=N[v]$, they are called \emph{twins}. The symmetric
difference between two sets $A$ and $B$ is denoted by $A\Sym B$. 

Given a graph $G$ and a subset $C$ of vertices of $G$, $C$ is called a
\emph{dominating set} if each vertex of $V(G)\setminus C$ has at least
one neighbor in $C$. The set $C$ is called a \emph{separating set} of
$G$ if for each pair $u,v$ of vertices of $G$, $N[u]\cap C\neq
N[v]\cap C$ (equivalently, $(N[u]\Sym N[v])\cap C\neq\emptyset$). If
$x\in N[u]$, we say that $x$ \emph{dominates} $u$. If $x\in N[u]\Sym
N[v]$, we say that $x$ \emph{separates} $u,v$.

\begin{definition}
A subset of vertices of a graph $G$ which is both a dominating set and
a separating set is called an \emph{identifying code} of $G$.
\end{definition}

The following observation gives an equivalent condition for a set to
be an identifying code, and follows from the fact that for two
vertices $u,v$ at distance at least~3 from each other,
$N[u]\Sym N[v]=N[u]\cup N[v]$.

\begin{observation}\label{obs:dist2}
For a graph $G$ and a set ${C}\subseteq V(G)$, if ${C}$ is dominating
and $N[u]\cap {C}\neq N[v]\cap {C}$ for each pair of vertices $u,v$ at
distance at most two from each other, then $C$ is an identifying code
of the graph.
\end{observation}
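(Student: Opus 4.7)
The plan is to verify that $C$ is both dominating and separating, using the observation that the only nontrivial case for separation is the one for pairs of vertices at distance at most two, which is exactly the hypothesis.

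Domination is given in the hypothesis, so only the separation property needs to be checked. I split the verification of separation into two cases depending on the distance between a pair $u,v\in V(G)$. If $u$ and $v$ are at distance at most two, then the hypothesis directly gives $N[u]\cap C\neq N[v]\cap C$, so these pairs are separated by $C$.

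The remaining case is when $u$ and $v$ are at distance at least three. First I would argue that $N[u]\cap N[v]=\emptyset$: any vertex $x$ in the intersection would either coincide with $u$ or $v$, or be a common neighbor of $u$ and $v$, and in either situation $u$ and $v$ would be at distance at most two, a contradiction. Consequently $N[u]\Sym N[v]=N[u]\cup N[v]$, which is the fact highlighted just before the observation. Then, since $C$ is a dominating set, there exists some $x\in N[u]\cap C$ (a vertex of $C$ that dominates $u$). Such an $x$ belongs to $(N[u]\cup N[v])\cap C=(N[u]\Sym N[v])\cap C$, showing that $N[u]\cap C\neq N[v]\cap C$ and hence that $C$ separates $u$ and $v$.

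There is no real obstacle here; the statement is essentially a repackaging of the definition together with the elementary distance-based observation about symmetric differences of closed neighborhoods. The only point requiring a line of care is the explicit justification that two vertices at distance at least three have disjoint closed neighborhoods, which follows immediately from the definition of distance.
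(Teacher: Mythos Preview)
Your proof is correct and follows exactly the approach the paper indicates: the paper merely states that the observation follows from the fact that vertices at distance at least~$3$ satisfy $N[u]\Sym N[v]=N[u]\cup N[v]$, and you have spelled out precisely this argument together with the use of domination.
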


The minimum size of a dominating set of graph $G$, its
\emph{domination number}, is denoted by $\gamma(G)$. Similarly, the
minimum size of an identifying code of $G$, $\M(G)$, is the
\emph{identifying code number} of $G$. It is known that for any
twin-free graph $G$ on $n$ vertices having at least one edge, we have:
$$\lceil\log_2(n+1)\rceil\le\M(G)\leq n-1.$$ The lower bound was proved
in~\cite{KCL98} and the upper bound, in~\cite{B01,GM07}. Both bounds are
tight and all graphs reaching these two bounds have been classified
(see~\cite{M06} for the lower bound and~\cite{FGKNPV10} for the upper
bound). Other papers studying bounds and extremal graphs for
identifying codes are e.g.~\cite{CHL07,FKKR10,FP12}.

In view of the above lower bound, we say that an identifying code $C$
of $G$ is \emph{asymptotically optimal} if
$$
|C| = O(\log{n})\;.
$$

The problem we will address in this paper is to deal with graphs that
have a large identifying code number, or are not even
identifiable. Our approach will consist in slightly modifying such a
graph in order to decrease its identifying code number and obtain an
asymptotically optimal identifying code, unless its domination number
prevents us from doing so.

One of the reasons for a graph to have a large identifying code number
is that it has a large domination number (this one being a monotone
parameter under edge deletion). For instance, we need roughly $n/3$
vertices to dominate all the vertices in a path of order $n$. When
this is the case, we cannot expect to decrease much the size of a
minimum identifying code by deleting edges from $G$, as the deletion
of edges cannot decrease the domination number.

However, there are many graphs with small domination number where the
identifying code number is very large~\cite{FGKNPV10,FP12}. Typically,
this phenomenon appears in graphs having a specific, ``rigid'',
structure. Supporting this intuition, Frieze, Martin, Moncel, Ruszink\'o and Smyth \cite{FMMRS07} have shown that the random graph $G(n,p)$ with $p\in (0,1)$,
admits an asymptotically optimal identifying code. In particular, they prove in~\cite{FMMRS07}  that
\begin{align*}
 \M(G(n,p))=(1+o(1)) \frac{2\log{n}}{\log{(1/q)}}\;,
\end{align*}
where $q=p^2+(1-p)^2$. This suggests that the lack of structure in
dense graphs implies the existence of a small identifying code.

\textbf{Our results and structure of the paper.} In Section~\ref{sec:main}, we prove our main result by selecting at
random a small set of edges that can be deleted to ``add some
randomness'' in the graph,

\begin{theorem}\label{thm:stronger}
For any graph $G$ on $n$ vertices ($n$ large enough) with maximum
degree $\Delta=\omega(1)$ and minimum degree $\delta\geq 66\log{\Delta}$, there
exists a subset of edges $F\subset E(G)$ of size
$$
|F|\leq 83n\log{\Delta}\;,
$$ 
such that 
$$
\M(G\setminus F)\leq 134\frac{n\log{\Delta}}{\delta}\;.
$$
\end{theorem}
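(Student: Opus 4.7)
I would use a two-stage randomized construction: independently sample a candidate code $C \subseteq V(G)$ and an edge deletion set $F\subseteq E(G)$, then verify with positive probability that $C$ is an identifying code of $G \setminus F$ (possibly after absorbing a small deterministic correction into $C$). Set $p = c_1 \log\Delta/\delta$ and $t = c_2 \log\Delta$ for constants $c_1, c_2$ to be tuned; include each vertex in $C$ independently with probability $p$, and independently at each vertex $v$ select a uniformly random subset $R_v \subseteq N_G(v)$ of size $t$ and put into $F$ every edge $\{v,w\}$ with $w \in R_v$. Chernoff gives $|C| \leq 2c_1 n\log\Delta/\delta$ with high probability; the bound $|F|\leq c_2 n\log\Delta$ holds from $|F|\leq \sum_v|R_v|=c_2 n\log\Delta$; and every vertex retains at least $\delta-O(\log\Delta)\geq \delta/2$ neighbors in $G\setminus F$, since $\delta \geq 66\log\Delta$.

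By Observation~\ref{obs:dist2} it suffices to show $C$ dominates $G\setminus F$ and separates each pair at distance $\leq 2$ in $G \setminus F$. For \emph{domination}, each vertex has $\Omega(\delta)$ neighbors in $G\setminus F$, so the probability it is not dominated is $(1-p)^{\Omega(\delta)}\leq \Delta^{-\Omega(c_1)}$ and the expected number of undominated vertices is $n\Delta^{-\Omega(c_1)}$, which can be made small enough to be absorbed by adding these vertices to $C$ without breaking the $134\,n\log\Delta/\delta$ budget.

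For \emph{separation} I split the pairs by the size $\sigma_{uv}:=|N_G[u]\Sym N_G[v]|$ of their original symmetric difference. In the bulk regime $\sigma_{uv}\geq \beta\delta$ (for a small constant $\beta$), the symmetric difference in $G\setminus F$ shrinks by at most the $O(\log\Delta)$ deletions per endpoint and therefore remains $\Omega(\delta)$, so the probability that $C$ misses it is at most $(1-p)^{\Omega(\delta)}\leq \Delta^{-\Omega(c_1)}$; since there are $O(n\Delta^2)$ pairs at distance $\leq 2$, the expected number of bulk unseparated pairs is bounded by $n\Delta^{2-\Omega(c_1)}$, which is $o(n\log\Delta/\delta)$ for $c_1$ sufficiently large and can again be absorbed into $C$.

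The main obstacle is the near-twin regime $\sigma_{uv}<\beta\delta$: here $u$ and $v$ share $\Omega(\delta)$ common neighbors and the original symmetric difference may miss $C$ entirely. One must exploit the randomness of $F$: a common neighbor $w$ is moved into the new symmetric difference iff exactly one of $uw,vw$ lies in $F$, and a direct computation on $R_u,R_v,R_w$ shows this happens with probability $\Theta(t/\delta)$. Thus the expected new symmetric difference is $\Omega(t)$, and a concentration argument (Chernoff or Azuma/Talagrand on the independent random subsets $R_v$) shows that with probability at least $1-\Delta^{-\Omega(c_2)}$ the new symmetric difference has size $\Omega(\log\Delta)$; conditioning on $F$ having this property for all near-twin pairs, the probability $C$ fails to hit the new symmetric difference is at most $(1-p)^{\Omega(\log\Delta)}$, and tuning $c_1,c_2$ makes the resulting union bound over $O(n\Delta^2)$ near-twin pairs fit within the budget. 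The two principal technical challenges are (a) proving concentration of the flipped-neighbor count against the dependency structure among shared common neighbors of different near-twin pairs, and (b) optimising $c_1, c_2, \beta$ so that all three expected bad-event counts (undominated vertices, bulk unseparated pairs, near-twin unseparated pairs) simultaneously fit the explicit constants $83$ and $134$ stated in the theorem.
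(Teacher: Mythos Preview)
Your approach has a genuine gap in the near-twin regime that cannot be repaired by tuning constants. You argue that the random deletions create a new symmetric difference of size $\Omega(\log\Delta)$, and then that $C$ hits it with failure probability at most $(1-p)^{\Omega(\log\Delta)}$. But with $p=c_1\log\Delta/\delta$ this failure probability is only
\[
(1-p)^{\Omega(\log\Delta)}\;\ge\;\exp\!\bigl(-O(c_1(\log\Delta)^2/\delta)\bigr),
\]
which is $1-o(1)$ whenever $\delta\gg(\log\Delta)^2$; in particular it is useless for the whole range $\delta\in[(\log\Delta)^2,\Delta]$ covered by the theorem. No choice of absolute constants $c_1,c_2$ makes $(1-p)^{\Omega(\log\Delta)}\cdot n\Delta^2$ small, and you cannot ``absorb'' these failures either: in a near-complete graph essentially every pair is a near-twin, so the number of unseparated pairs would be $\Theta(n^2)$, far exceeding the $O(n\log\Delta/\delta)$ budget.

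The missing idea --- and the crux of the paper's argument --- is to \emph{couple} the edge deletions with $C$: delete only edges incident to $C$, with probabilities calibrated to each endpoint's degree into $C$ (this is the random subgraph $G(C,f)$ of Definition~\ref{def:G(B,f)}). Then a flipped common neighbour is automatically a code vertex, so separation comes from the randomness of $F$ alone, not from hoping that $C$ hits a small random set. Lemma~\ref{lem:symmetric} makes this precise: the failure probability for a pair is $e^{-\Omega(\log\Delta)}=\Delta^{-\Omega(1)}$ directly, independent of $\delta$. A secondary point is that even with this coupling, a union bound over $O(n\Delta^2)$ pairs is insufficient when $\Delta$ is small relative to $n$ (e.g.\ $\Delta=\mathrm{polylog}\,n$); the paper handles this with the Lov\'asz Local Lemma, exploiting that each bad event depends on only $O(\Delta^6)$ others.
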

Observe that when $\delta=\Theta(n)$, this result is similar to the one
in~\cite{FMMRS07}.

We then show in Section~\ref{sec:tight} that our result is
asymptotically best possible in terms of both the number of deleted
edges and of the size of the final identifying code for any graph with
$\Delta=\Poly(\delta)$. For smaller values of the minimum degree, we
prove that our result is almost optimal. We also show that the two
assumptions $\Delta=\omega(1)$ and $\delta\geq c\log{\Delta}$ for some
constant $c$ are necessary.

We present some consequences of our result in
Section~\ref{sec:consequences}. When considering the case of adding
edges to the graph, we get analogous (symmetric) results, showing that
every graph is a large spanning subgraph of some graph that admits a
small identifying code. This result also turns out to be tight. We
also describe an application to the closely related topic of
\emph{watching systems}.

The paper concludes with some final remarks and open problems.

\textbf{Our methods.} To show our results, we use the technique of
defining a suitable random spanning subgraph of $G$: we first randomly
choose a code $C$, and then we randomly delete edges among the edges
containing vertices of $C$. We then analyze the construction by
applying concentration inequalities and the use of the local lemma.

A similar approach has been used in the literature when considering
\emph{random subgraphs of a graph}: for any graph $G$, consider the
graph $G_p$ to be the subgraph of $G$ obtained by keeping \emph{each}
edge from $E(G)$ independently with probability $p$. The behavior of
random subgraphs of graphs $G$ with minimum degree $\delta$, inspired
by applications in epidemiology or social networks, has been widely
studied~\cite{ch2007,fk2012,kls2012,ks2013}.  A well-known instance of
this problem is the classical Erd\H os-R\'enyi random graph $G(n,p)$
where $G=K_n$, the complete graph of order $n$. In most of the cases,
it was shown that many similarities exist between $G_p$ and the random
graph $G(\delta,p)$.  The connectivity of a random subgraph of a graph,
where every edge has a different probability of being deleted, has
been studied in~\cite{A1993}. Our random subgraph model is adapted to
the analysis of identifying codes, and can be seen as a weighted
version of $G_p$.

\section{Main theorem}\label{sec:main}

In this section, we prove Theorem~\ref{thm:stronger}. We will need
some tools and lemmas.

\subsection{Important tools and lemmas}

In our proofs, we will repeatedly use the Chernoff inequality for the
sum of independent bounded random variables:

\begin{lemma}[{Chernoff inequality~\cite[Corollary~$A.1.14$]{AS00}}]\label{lem:chernoff2}
	Let $X_1,\dots,X_{N}$ be independent Bernoulli random variable with probability $p_i$ and
define $X=\sum_{i=1}^{N} X_i$.  Then , for all $\eps>0$,
$$\Pr(|X- \E(X)|\geq \eps \E(X))< 2e^{-c_{\eps}\E(X)}\;,$$
where
$$
c_{\eps}= \min \left\{(1+\eps)\log(1+\eps)-\eps, \frac{\eps^2}{2}\right\}\;.
$$
\end{lemma}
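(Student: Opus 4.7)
The plan is to use the standard exponential moment (Bernstein/Chernoff) method, reducing the two-sided tail to two one-sided bounds and then taking the worse exponent. Let $\mu=\E(X)=\sum_i p_i$. First I would handle the upper tail $\Pr(X\ge (1+\eps)\mu)$. For any $t>0$, Markov's inequality applied to the non-negative random variable $e^{tX}$ gives
\[
\Pr(X\ge (1+\eps)\mu)\le e^{-t(1+\eps)\mu}\,\E(e^{tX}).
\]
By independence, $\E(e^{tX})=\prod_i\E(e^{tX_i})=\prod_i\bigl(1+p_i(e^t-1)\bigr)$, and the elementary inequality $1+x\le e^x$ yields $\E(e^{tX})\le \exp\bigl(\mu(e^t-1)\bigr)$. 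Optimizing in $t$ by choosing $t=\log(1+\eps)$ cancels the exponential factors and leaves
\[
\Pr(X\ge (1+\eps)\mu)\le \exp\bigl(-\mu\bigl((1+\eps)\log(1+\eps)-\eps\bigr)\bigr).
\]

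For the lower tail $\Pr(X\le (1-\eps)\mu)$, I would run the same calculation with $t<0$, optimizing at $t=\log(1-\eps)$ (assuming $\eps<1$; otherwise the bound is vacuous since $X\ge0$). This produces
\[
\Pr(X\le (1-\eps)\mu)\le \exp\bigl(-\mu\bigl((1-\eps)\log(1-\eps)+\eps\bigr)\bigr).
\]
A short Taylor expansion of $(1-\eps)\log(1-\eps)+\eps$ around $\eps=0$ shows that this exponent is at least $\eps^2/2$ on $(0,1)$, so the lower tail is controlled by $\exp(-\mu\eps^2/2)$.

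Finally, I would combine the two tails by a union bound, which produces the factor of $2$ in front, and take the worse of the two exponents. Since $(1+\eps)\log(1+\eps)-\eps$ is the sharp rate for the upper tail and $\eps^2/2$ is a clean lower bound for the lower-tail rate, the resulting exponent is exactly $c_\eps=\min\{(1+\eps)\log(1+\eps)-\eps,\;\eps^2/2\}$, giving the stated inequality.

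The only real subtlety, rather than an obstacle, is verifying the inequality $(1-\eps)\log(1-\eps)+\eps\ge \eps^2/2$ for $\eps\in(0,1)$, which one checks by differentiating twice and comparing at $\eps=0$; everything else is a routine application of Markov's inequality to the moment generating function together with the product structure coming from independence. Since the result is cited verbatim from Alon--Spencer, I would ultimately just refer to that reference rather than reproduce the calculation.
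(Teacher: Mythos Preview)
Your argument is the standard exponential-moment derivation and is correct; in particular, the check that $(1-\eps)\log(1-\eps)+\eps\ge \eps^2/2$ on $(0,1)$ via second derivatives is fine. The paper itself offers no proof at all for this lemma: it is stated with a direct citation to \cite[Corollary~A.1.14]{AS00} and used as a black box, so your final remark that one would simply refer to Alon--Spencer is exactly what the paper does.
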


In what follows, for any set of vertices $B\subseteq V(G)$ and any
$v\in V(G)$, we let $N_G^B(v)= N_G(v)\cap B$ be the set of neighbors of $v$ in $B$. Analogously,  $N_G^B[v]= N_G[v]\cap B$. We denote by $d_B(v)=|N_G^B (v)|$,  the degree of $v$ within set $B$.

\begin{definition}\label{def:G(B,f)}
Given a graph $G$ and $B\subseteq V(G)$, a function $f: V(G)
\rightarrow\mathbb{R}^+\cup \{0\}$ is said to be $(G,B)$--{\it
  bounded} if for each vertex $u$, $f(u)\leq d_B(u)$ and for each pair
$u,v$ of vertices with $d_B(u)\geq d_B(v)$, $f(u)/d_B(u)\leq
f(v)/d_B(v)$. Given a $(G,B)$--bounded function $f$, we define the
random spanning subgraph $G(B,f)$ of $G$ as follows:
\begin{itemize}
\item $G(B,f)$ contains all edges of the subgraph $G[V(G)\setminus B]$ induced by $V(G)\setminus B$, and
\item each edge $uv$ incident with $B$ is independently chosen to be in $G(B,f)$ with probability $1-p_{uv}$, where
$$
p_{uv}=\frac{1}{4}\left(\frac{f(u)}{d_B(u)} + \frac{f(v)}{d_B(v)}\right)\;.
$$
Observe that, since $f(u)\leq d_B(u)$ for each vertex $u\in V(G)$, we have $p_{uv}\leq 1/2$.
\end{itemize}
\end{definition}

The next lemma gives an exponential upper bound on the probability
that two vertices of $G(B,f)$ are not separated by $B$. This lemma
is a crucial one in our main proof.

\begin{lemma}\label{lem:symmetric}
 Let $G$ be a graph, $B\subseteq V(G)$, and $f$ a $(G,B)$--bounded
 function. In the random subgraph $G(B,f)$, for every pair
 $u,v$ of distinct vertices with $d_B(u)\geq d_B(v)$, we have
\begin{align*}
\Pr\left(N_{G(B,f)}^B[u]=N_{G(B,f)}^B[v]\right)& \leq 
e^{-3 f(u)/16 }\;.
\end{align*}
\end{lemma}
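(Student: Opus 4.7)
The plan is to express $\mathcal{E}:=\{N_{G(B,f)}^B[u]=N_{G(B,f)}^B[v]\}$ as an intersection of events indexed by vertices of $B$, write $\Pr(\mathcal{E})$ as a product using independence of disjoint edge-deletion decisions, and bound each factor uniformly by $e^{-3a_u/16}$, where $a_u := f(u)/d_B(u)$. The two hypotheses enter only through the resulting inequality $a_u \leq a_v \leq 1$: $(G,B)$-boundedness of $f$ combined with $d_B(u) \geq d_B(v)$ gives $a_u\leq a_v$, while $f(u) \leq d_B(u)$ gives $a_u \leq 1$. Partition $N_G^B[u]\cup N_G^B[v]$ into $S_u:=N_G^B[u]\setminus N_G^B[v]$, $S_v:=N_G^B[v]\setminus N_G^B[u]$, and $S_0:=N_G^B[u]\cap N_G^B[v]$, and for $w\in B$ let $\mathcal{E}_w$ denote the event that $w$ lies in both $N_{G(B,f)}^B[u]$ and $N_{G(B,f)}^B[v]$, or in neither.

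First, dispose of the degenerate cases: if $u\in B$ and $uv\notin E(G)$, then $u\in S_u$ and $\mathcal{E}_u$ is impossible (no new edge can be created), so $\Pr(\mathcal{E})=0$ and the bound holds; symmetrically if $v\in B$. So one may assume that $u$ or $v$ being in $B$ forces $uv \in E(G)$. Under this assumption, the events $\mathcal{E}_w$ are controlled by pairwise disjoint sets of random edges, with one subtlety: when both $u,v\in B$, the events $\mathcal{E}_u$ and $\mathcal{E}_v$ are both governed by the edge $uv$ and in fact coincide (both being ``edge $uv$ is kept''). Computing $\Pr(\mathcal{E}_w)$ is then straightforward: it equals $p_{uw}$ for $w\in S_u\setminus\{u\}$, $p_{vw}$ for $w\in S_v\setminus\{v\}$, $(1-p_{uw})(1-p_{vw})+p_{uw}p_{vw}$ for $w\in S_0\setminus\{u,v\}$, and $1-p_{uv}$ for $w\in S_0\cap\{u,v\}$, with this last factor counted once even if this set has two elements.

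Each factor is at most $e^{-3a_u/16}$: any probability of the form $(a_x+a_y)/4\leq 1/2\leq e^{-3/16}$, which handles $p_{uw}$ and $p_{vw}$; for the $S_0$ expression, $p_{vw}\leq 1/2$ gives $(1-p_{uw})(1-p_{vw})+p_{uw}p_{vw}\leq 1-p_{uw}\leq 1-a_u/4\leq e^{-a_u/4}\leq e^{-3a_u/16}$; and similarly $1-p_{uv}\leq 1-a_u/4\leq e^{-3a_u/16}$. A short case check according to whether $u$ or $v$ lies in $B$ shows that in every non-degenerate case the product contains at least $d_B(u)$ such factors (the $S_u$-side plus $S_0$-side, with the merged $uv$-factor compensating exactly for the indices removed by the set operations). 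Discarding the remaining factors (which only decrease the product) yields
\[\Pr(\mathcal{E})\leq\bigl(e^{-3a_u/16}\bigr)^{d_B(u)} = e^{-3 a_u d_B(u)/16} = e^{-3f(u)/16}.\]
The main technical obstacle is the bookkeeping in the case $u,v\in B$: the edge $uv$ must be treated as a single shared event between $\mathcal{E}_u$ and $\mathcal{E}_v$ rather than as two independent factors, which is precisely what keeps the factor count at $d_B(u)$ and not strictly less.
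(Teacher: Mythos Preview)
Your proof is correct and follows essentially the same route as the paper's: write the target event as the intersection of independent per-vertex events indexed by $N_G^B[u]\cup N_G^B[v]$, bound each factor on the $u$-side by $e^{-3a_u/16}$ (the paper does this by lower-bounding $\E(D)$ via the $x(1-x)$ trick, you do it directly via $1-x\le e^{-x}$, and your bookkeeping for the case $u,v\in B$ is in fact more explicit than the paper's), and count at least $d_B(u)$ such factors. One harmless slip: the inequality $(1-p_{uw})(1-p_{vw})+p_{uw}p_{vw}\le 1-p_{uw}$ is equivalent to $p_{uw}\le 1/2$, not $p_{vw}\le 1/2$ as you write---but both hold by construction, so nothing breaks.
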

\begin{proof}
Consider the following partition of $S=N^B_G[u]\cup N^B_G[v]$ into
three parts: $S_1$, the vertices of $B$ dominating $u$ but not $v$; $S_2$, the vertices of $B$
dominating $v$
but not $u$; and $S_3$, the vertices of $B$ dominating both $u$ and
$v$.

Let $D$ be the random variable which gives the size of the symmetric
difference of $N_{G(B,f)}^B[u]$ and $N_{G(B,f)}^B[v]$. The statement
of the lemma is equivalent to $\Pr(D=0) < e^{-3f(u)/16}$.

The random variable $D= |N_{G(B,f)}^B[u]\oplus N_{G(B,f)}^B[v]|$ can be written as the sum of
independent Bernoulli variables
$$
D=\sum_{w\in S} D_w\;,
$$
where $D_w=1$ if and only if $w$ dominates precisely one of the two vertices $u$ or
$v$ in $G(B,f)$. Therefore, fro any $w\notin\{u,v\}$,
$$
\Pr (D_w=1)=\left\{ \begin{array}{ll}  1-p_{uw} & w\in S_1\\  1-p_{vw} & w\in
S_2\\p_{uw}(1-p_{vw})+p_{vw}(1-p_{uw})&w\in S_{3}\end{array}\right.
$$

Since we want to bound from above the probability that $D=0$, we can always assume that $u,v\notin
N_{G(B,f)}^B[u]\Sym  N_{G(B,f)}^B[v]$. 
Recall that $d_B(u)\geq d_B(v)$. By the definition of a $(G,B)$--bounded function, we have that
$p_{uw}\leq p_{vw}$ for each $w\in S_3$. Since $x(1-x)$ has a unique maximum at $x=1/2$ and $p_{uw},
p_{vw}\le 1/2$, we also have:
\begin{align}\label{eq:3}
p_{vw}(1-p_{uw})\geq p_{uw}(1-p_{uw}) \geq \frac{f(u)}{4d_B(u)}\left( 1-
\frac{f(u)}{4d_B(u)}\right)=g(u)\;,
\end{align}
for each $w\in S_3$.

For $w\in S$, denote by $q_w$  the parameter of the Bernoulli random variable $D_w$.
Then,
\begin{align}\label{eq:1}
\E(D) &\geq\sum_{w\in N^B_G(u)} q_w  \nonumber \\
 & =  \sum_{w\in S_1} q_w+ \sum_{w\in S_3} q_w \nonumber \\
 & = \sum_{w\in S_1} (1-p_{uw})+ \sum_{w\in S_3}  \left(p_{uw}(1-p_{vw})+p_{vw}(1-p_{uw})\right)
\nonumber \\
 &\geq \sum_{w\in S_1} p_{uw}(1-p_{uw})+ \sum_{w\in S_3}   p_{uw}(1-p_{uw}) \nonumber \\
& \geq g(u) d_B(u) \nonumber\\ 
&= \frac{f(u)}{4}\left(1-\frac{f(u)}{4d_B(u)}\right) \nonumber\\
&\geq  \frac{3}{16} f(u)\;.
\end{align}

Finally, we have that
\begin{align*}
\Pr(D=0)&= \prod_{w\in S} (1-q_{w})\leq  e^{-\sum_{w\in S} q_w}
 =  e^{-\E(D)}
 \leq e^{-3 f(u)/16 }\;,
\end{align*}
and the lemma follows.
\end{proof}

In the proof of our main result, we will use the following version of
the Lov\'asz local lemma, which can be found in e.g. \cite[Corollary
  $5.1.2$]{AS00} (the lower bound on
$\Pr(\bigcap_{i=1}^M\overline{E_i})$ can be derived from the general
local lemma, see \cite[Lemma
  $5.1.1$]{AS00}, by setting $x_i=e\cdot p_{LL}$).

\begin{lemma}[Symmetric Local Lemma]\label{lem:LL}
Let $\mathcal{E}=\left\{E_1,\ldots,E_M\right\}$ be a set of (typically
``bad'') events such that each $E_i$ is mutually independent of
$\mathcal{E}\setminus(\mathcal{D}_i\cup\left\{E_i\right\})$ for some
$\mathcal{D}_i\subseteq \mathcal E$. Let $d_{LL}=|\mathcal D_i|$, and
suppose that there exists a real $0<p_{LL}<1$ such that, for each $1\le i\le
M$,
\begin{itemize}
\item $\Pr(E_i)\leq p_{LL}$, and
\item $e\cdot p_{LL}\cdot (d_{LL}+1)\leq 1$.
\end{itemize}

Then $\Pr(\bigcap_{i=1}^M\overline{E_i})\ge(1-e\cdot p_{LL})^M>0$.
\end{lemma}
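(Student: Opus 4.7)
The plan is to derive the symmetric statement from the general (asymmetric) Lovász Local Lemma (\cite[Lemma~5.1.1]{AS00}), which the excerpt itself flags as the intended route. That general form says: if one can assign reals $x_1,\ldots,x_M\in[0,1)$ such that for every $i$,
\[
\Pr(E_i)\le x_i\prod_{j\in\mathcal{D}_i}(1-x_j),
\]
then $\Pr\bigl(\bigcap_{i=1}^M\overline{E_i}\bigr)\ge\prod_{i=1}^M(1-x_i)>0$. I will take this as a black box and only need to check its hypothesis for a good choice of the $x_i$.

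The natural move is the uniform choice $x_i=e\cdot p_{LL}$ for every $i$. The assumption $e\cdot p_{LL}\cdot(d_{LL}+1)\le 1$ gives $e\cdot p_{LL}\le 1/(d_{LL}+1)<1$, so each $x_i$ lies in $[0,1)$ as required. Under this choice, and using $|\mathcal{D}_i|\le d_{LL}$, the hypothesis of the general LLL reduces (via $\Pr(E_i)\le p_{LL}$) to
\[
(1-e\cdot p_{LL})^{d_{LL}}\ge \frac{1}{e}.
\]

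This algebraic inequality is the only real step; in fact it is the main obstacle one must clear, although it is routine. From $e\cdot p_{LL}\le 1/(d_{LL}+1)$ and monotonicity of $t\mapsto(1-t)^{d_{LL}}$,
\[
(1-e\cdot p_{LL})^{d_{LL}}\ge \left(1-\frac{1}{d_{LL}+1}\right)^{d_{LL}}=\left(\frac{d_{LL}}{d_{LL}+1}\right)^{d_{LL}}\ge \frac{1}{e},
\]
where the last bound is the standard inequality $(1+1/d_{LL})^{d_{LL}}\le e$ (trivially extended to the case $d_{LL}=0$, in which both sides equal $1$).

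Plugging this verified hypothesis back into the general LLL yields
\[
\Pr\!\left(\bigcap_{i=1}^M\overline{E_i}\right)\ge \prod_{i=1}^M(1-x_i)=(1-e\cdot p_{LL})^M,
\]
which is strictly positive because $e\cdot p_{LL}<1$. This is exactly the claimed bound, so the lemma follows. No probabilistic subtlety is required beyond the invocation of the general LLL; the entire argument is the reduction plus the one-line $1/e$-estimate above.
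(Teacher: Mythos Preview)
Your proposal is correct and follows precisely the route the paper indicates: the paper does not give its own proof but cites \cite[Corollary~5.1.2]{AS00} and remarks that the lower bound follows from the general local lemma \cite[Lemma~5.1.1]{AS00} by taking $x_i=e\cdot p_{LL}$. You carry out exactly this substitution and verify the required inequality $(1-e\cdot p_{LL})^{d_{LL}}\ge 1/e$ cleanly, so there is nothing to add.
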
 

\subsection{Proof of the main result}

We are now ready to prove the main theorem.

\begin{proof}[Proof of Theorem~\ref{thm:stronger}]
The proof is structured in the following steps:
\begin{enumerate}
\item We select a set $C$ at random, where each vertex is selected
  independently with probability $p$. Using the Chernoff inequality,
  we estimate the probability of the event $A_C$ that $C$ is small
  enough for our purposes. From $C$, we construct the spanning
  subgraph $G(C,f)$ of $G$ as given in Definition~\ref{def:G(B,f)},
  for some suitable function $f$.
\item We use the local lemma (Lemma~\ref{lem:LL}) and
  Lemma~\ref{lem:symmetric} to bound from below the probability that
  the following events (whose conjunction we call $A_{LL}$) hold
  jointly: (i) in $G(C,f)$, each pair of vertices that are at distance
  at most~2 from each other are separated by $C$; and (ii) for each
  such pair and each member of this pair in $G$, its degree within
  $C$ in $G$ is close to its expected value $d(v)p$. We show that with
  nonzero probability, $A_C$ and $A_{LL}$ hold jointly.
\item We find a dominating set $D$ of $G$ with $|D|=O(|C|)$; by
  Observation~\ref{obs:dist2}, if $A_{LL}$ holds, then $C\cup D$ is an
  identifying code.
\item Finally, we show that, subject to $A_C$ and $A_{LL}$, the
  expected number of deleted edges is as small as desired.
\end{enumerate}

\textbf{Step 1. Constructing $C$ and $G(C,f)$}

Let $C\subseteq V(G)$ be a subset of vertices, where each vertex $v$ in
$G$ is chosen to be in $C$ independently with probability
	$$
	p=\frac{66\log{\Delta}}{\delta}\;.
	$$
Observe that $p\leq 1$ since $\delta\geq 66\log{\Delta}$.

Consider the random variable $|C|$ and recall that $\E(|C|)=n
p$. 

Define $A_C$ to be the event that
 \begin{align}\label{eqn:einstein}
     |C|\leq 2np =\frac{132n\log{\Delta}}{\delta}.\tag{$A_C$}
  \end{align}

Since the choices of the elements in $C$ are done independently, by
setting $\eps=1$ in Lemma~\ref{lem:chernoff2}, notice that $c_\eps >1/3$, we have
 \begin{align}\label{eqn:A_C}
\Pr(\overline{A_C}) < e^{-\frac{22n\log{\Delta}}{\delta}}\;.
 \end{align}

We let 
$$
f(u)=\min\left(66\log\Delta,d_C(u)\right).
$$ 

Observe that $f$ is $(G,C)$--bounded. We construct $G(C,f)$ as the
random spanning subgraph of $G$ given in
Definition~\ref{def:G(B,f)}, where each edge $uv$ incident to a vertex
of $C$ is deleted with probability $p_{uv}$.

\textbf{Step 2. Applying the local lemma}

Let $u,v$ be a pair of vertices at distance at most~2 in $G$. We
define the following events:
\begin{itemize}
\item $A_{uv}$ is the event that there exists a vertex $w\in\{u,v\}$ such that the degree of $w$ within $C$ is deviating from its expected value $d(w)
p$ by half, i.e. $|d_C(w)-d(w) p| \geq \frac{d(w) p}{2}$;
\item $B_{uv}$ is the event that $N_{G(C,f)}^C[u]=N_{G(C,f)}^C[v]$;
\item $E_{uv}$ is the event that $A_{uv}$ or $B_{uv}$ occurs;
\item $A_{LL}$ is the event that no event $E_{uv}$ occurs.
\end{itemize}

In order to apply the Local Lemma, we wish to upper bound the
probability of $E_{uv}$.  We have:
\begin{align*}
\Pr(E_{uv})&\leq \Pr(A_{uv})+\Pr(B_{uv})\\
&=\Pr(A_{uv})
+\Pr(B_{uv}|A_{uv})\cdot\Pr(A_{uv})+\Pr(B_{uv}|\overline{A_{uv}})\cdot\Pr(\overline{A_{ uv}})\;.
\end{align*}

Let us upper bound $\Pr(A_{uv})$. We use
Lemma~\ref{lem:chernoff2} with $\eps=1/2$. Observe that
$c_{\eps}>\frac{1}{10}$, and thus
\begin{align*}
\Pr(A_{uv})& <\Pr\left(|d_C(u)-d(u) p|\geq \frac{d(u) p}{2}\right)+\Pr\left(|d_C(v)-d(v) p|\geq \frac{d(v) p}{2}\right)\\
&\leq  2e^{-\frac{1}{10} d(u)p} +2e^{-\frac{1}{10} d(v)p}\\
&=  2e^{-\frac{66d(u)\log\Delta}{10\delta}} +2e^{-\frac{66d(v)\log\Delta}{10\delta}}\\
& \leq 4e^{-\frac{33\log\Delta}{5}}\\
& \leq 4\Delta^{-\frac{33}{5}}\;.
\end{align*}
Next, we give an upper bound for $\Pr(B_{uv}|\overline{A_{uv}})$. For such a purpose, we apply
Lemma~\ref{lem:symmetric} with $B=C$ and
$f(u)=\min(66\log\Delta,d_C(u))$. Observe that $f$ is $(G,C)$--bounded.
Since $A_{uv}$ does
not hold, we know that $d_C(u)$ and $d_C(v)$ are large enough, i.e. for
$w\in\{u,v\}$, $d_C(w)\geq\frac{d(w) p}{2}\geq\frac{\delta
  p}{2}=33\log\Delta$; thus $f(u),f(v)\geq 33\log\Delta$. We have:
\begin{align}
\Pr(B_{uv}|\overline{A_{uv}})\leq e^{-\frac{3\cdot 33\log\Delta}{16}}\leq\Delta^{-\frac{99}{16}}\;.
\end{align}
The probability that the event $E_{uv}$ holds is
\begin{align*}
\Pr(E_{uv})\leq &
\Pr(A_{uv})
+\Pr(B_{uv}|A_{uv})\cdot\Pr(A_{uv})+\Pr(B_{uv}|\overline{A_{uv}})\cdot\Pr(\overline{A_{uv } })\\
& \leq  4\Delta^{-\frac{33}{5}}+1\cdot 4\Delta^{-\frac{33}{5}} + \Delta^{-\frac{99}{16}} \cdot 1\\
& \leq 2\Delta^{-\frac{99}{16}}=p_{LL}\;,
\end{align*}
where we used $\Delta=\omega(1)$.

We now note that each event $E_{uv}$ is mutually independent of all but at most $2\Delta^6$ events
$E_{u'v'}$. Indeed, $E_{uv}$ depends on the random variables determining the existence of the edges
incident to $u$ and $v$. This is given by probabilities $p_{uw}$ and $p_{vw}$ that depend on
$d_C(w)$, where $w$ is at distance at most one from either $u$ or $v$. Thus, $E_{uv}$ depends only
on the vertices at distance at most two from either $u$ or $v$ belonging to $C$. In other words,
$E_{uv}$ and $E_{u'v'}$ are mutually independent unless there exist a vertex $w$ at distance at most
two from both pairs; in other words, $d(\{u,v\},\{u',v'\})\leq 4$. Hence, there are at most
$2\Delta^4$ choices for the vertex among $\{u',v'\}$ that is closest from $\{u,v\}$ (say $u'$), and
at most $\Delta^2$ additional choices for $v'$, since $d(u',v')\leq 2$.

Therefore, we can apply Lemma~\ref{lem:LL} if
$$
e\cdot 2\Delta^{-\frac{99}{16}}\cdot (2\Delta^6+1) \leq 1\;,
$$
which holds since $\Delta=\omega(1)$.

Now, by Lemma~\ref{lem:LL} and since there are at most $\frac{n\Delta^2}{2}$
events $E_{uv}$ (one for each pair of vertices at distance at most~2 from each other) and
$p_{LL}=2\Delta^{-\frac{99}{16}}$,
\begin{align}\label{eqn:A_LL}
\Pr(A_{LL})&\geq (1-e\cdot p_{LL})^M \geq e^{-2e\cdot p_{LL}M}\geq
e^{-2en\Delta^{2-\frac{99}{16}}}\;,
\end{align}
where we have used $(1-x)= e^{-x (1-O(x))}\geq e^{-2x}$, if $x=o(1)$.

\textbf{Step 3. Revealing the identifying code}

Let us lower bound the probability that both $A_C$ and $A_{LL}$ hold,
by using Inequalities~\ref{eqn:A_C} and~\ref{eqn:A_LL}:
\begin{align*}
\Pr(A_C\cap A_{LL})&\geq\Pr(A_{LL})-\Pr(\overline{A_C})\\
& \geq e^{-2en\Delta^{2-\frac{99}{16}}}-e^{-\frac{22n\log{\Delta}}{\delta}}\;,
\end{align*}
which is strictly positive if
$$
\frac{22\log\Delta}{\delta} > 2e\Delta^{2-\frac{99}{16}}\;,
$$ 
which holds since $n$ is large (and hence $\Delta=\omega(1)$ is
large too), and $\delta\leq \Delta$.

Hence, there exists a set $C$ of size $132\frac{n\log{\Delta}}{\delta}$
such that all vertices at distance~2 from each other are separated by
$C$, and such that the degree in $C$ of all vertices is large.

In order to build an identifying code, we must also make sure that all
vertices are dominated. It is well-known that for any graph $G$,
$\gamma(G) \leq (1+o(1))\frac{n\log{\delta}}{\delta}$ (see
e.g. \cite[Theorem~$1.2.2$]{AS00}). Hence, we select a dominating set
$D$ of $G$ with size $(1+o(1))\frac{n\log{\delta}}{\delta}$. Then, by
Observation~\ref{obs:dist2}, $C\cup D$ is an identifying code of size
at most 
$$(132+1+o(1))\frac{n\log{\Delta}}{\delta}\leq
134\frac{n\log{\Delta}}{\delta}.$$

\textbf{Step 4. Estimating the number of deleted edges}

Let $Y=|E(G)\setminus E(G(C,f))|$ be the number
of edges we have deleted from $G$ to obtain $G(C,f)$.
Recall that each edge $uv\in E(G)$ is deleted independently from $G$ with probability 
$$
p_{uv}=\frac{1}{4}\left(\frac{f(u)}{d_C(u)}+\frac{f(v)}{d_C(v)}\right)\;,
$$
if one of its endpoints is in $C$.

Since  $\Pr(A_C\cap A_{LL})> 0$, there is a small identifying code of $G$ obtained by deleting at most  $\E(Y|A_C\cap A_{LL})$ edges. We next give an upper bound for $\E(Y|A_C\cap A_{LL})$. If both  $A_C$ and $A_{LL}$ hold, then 
$$
p_{uv}\leq \frac{1}{4}\left(\frac{66\log{\Delta}}{d_C(u)}+\frac{66\log{\Delta}}{d_C(v)}\right)\;.
$$
The expected number of deleted edges is
$$
\E(Y|A_C\cap A_{LL}) =\sum_{\substack{uv\in E(G)\\\left(\{u,v\}\cap C\right)\neq\emptyset}} p_{uv}\;.
$$
Observe that in order to estimate this quantity, we can split the two
additive terms in each $p_{uv}$: for every $u\notin C$, we sum all the
terms $\frac{66\log\Delta}{4d_C(u)}$ for all $v\in C$ being
neighbors of $u$; for every $u\in C$, we sum all the terms
$\frac{66\log\Delta}{4d_C(u)}$ for all $v\in V(G)$ being
neighbors of $u$.
\begin{align*}
	\E(Y|A_C\cap A_{LL}) &\leq \frac{1}{4}\left(\sum_{u\notin C} \sum_{v\in N^C_G(u)}
\frac{66\log\Delta}{d_C(u)}+ \sum_{u\in C}
\sum_{v\in N_G(u)} \frac{66\log\Delta}{d_C(u)}\right) \\
&\leq\frac{1}{4}\left(\sum_{u\notin C} d_C(u)\frac{66\log\Delta}{d_C(u)}+\sum_{u\in C}d(u)
\frac{66\log\Delta}{d_C(u)}\right)\\
&\leq\frac{1}{4}\left( |V(G)\setminus C|\cdot 66\log\Delta + \sum_{u\in C}
2\frac{66\log\Delta}{p}\right)\\
&\leq\frac{1}{4}\left(n\cdot 66\log\Delta + 2|C|\delta\right)\\
 &\leq \frac{66 n\log\Delta+ 264 n\log\Delta}{4}\\
& \leq 83n\log{\Delta}\;,
\end{align*}
where we used the fact (implied by $A_{LL}$) that for any vertex $v$,
$\frac{d(v)p}{2}\leq d_C(v)$ at the second line, and that $A_C$
implies $|C|\leq 132\frac{n\log{\Delta}}{\delta}$ at the fifth
line.

Summarizing, we have  shown the existence of a small identifying code in a 
spanning subgraph of $G$ obtained by deleting at most $\E(Y|A_C\cap
A_{LL})$ edges from $G$, which completes the proof.
\end{proof}

\section{Asymptotic optimality of Theorem~\ref{thm:stronger}}\label{sec:tight}

In this section, we discuss the optimality of
Theorem~\ref{thm:stronger}, first with respect to the size of the
constructed code and the number of deleted edges, and then with
respect to the hypothesis $\Delta=\omega(1)$ and  $\delta\geq 66\log\Delta$.

\subsection{On the size of the code and the number of deleted edges}

Charon, Honkala, Hudry and Lobstein showed that deleting an edge from
$G$ can decrease by at most $2$ the identifying code number of a
graph~\cite{CHHL13}. That is, for any graph $G$ and any edge $uv \in
E(G)$,
$$
\M(G)\leq  \M(G\setminus uv)+2\;.
$$

This directly implies that for every graph with linear identifying
code number, one needs to delete a subset $F$ of at least $\Omega(n)$
edges, to get a graph with $\M(G\setminus F)=o(n)$.

We will show that, indeed, one needs to delete at least
$\Omega(n\log{n})$ edges from the complete graph to get a graph with
an asymptotically optimal identifying code. Using this, we will derive
a family of graphs with arbitrary minimum degree $\delta$, that
asymptotically attains the bound of Theorem~\ref{thm:stronger}, both
in number of edges and size of the minimum code, when
$\Delta=\Poly(\delta)$.

First of all, we prove that every graph with an asymptotically optimal
identifying code cannot contain too few edges.

\begin{lemma}\label{lem:sparse}
 For any $M'\geq 0$, there exists a constant $c_0>0$ such that any
 graph $G$ with $\M(G)\leq M'\log{n}$ contains at least $c_0 n\log{n}$
 edges.
\end{lemma}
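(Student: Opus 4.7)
The plan is to exploit the fact that an identifying code $C$ of size at most $M'\log n$ must assign $n$ distinct, nonempty signatures $\sigma(v)=N[v]\cap C$, and to combine this with the double-counting identity
\[
\sum_{v\in V(G)}|\sigma(v)|\;=\;\sum_{c\in C}|N[c]|\;=\;|C|+\sum_{c\in C}d(c)\;\le\;|C|+2|E(G)|,
\]
which reduces the problem to a lower bound on $\sum_v |\sigma(v)|$.

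The key observation is that most signatures cannot be small, because there are too few small subsets of $C$. Setting $m:=|C|\le M'\log n$, the number of vertices $v$ with $|\sigma(v)|\le t$ is at most $\sum_{k=1}^{t}\binom{m}{k}\le t\binom{m}{t}\le t(em/t)^t$. I will pick $t=\alpha\log n$ with $\alpha>0$ chosen small enough (depending only on $M'$) so that
\[
t\left(\frac{em}{t}\right)^{t}\;\le\;\alpha\log n\cdot n^{\alpha\log(eM'/\alpha)}\;\le\;\tfrac{n}{2},
\]
which is possible because $\alpha\log(eM'/\alpha)\to 0$ as $\alpha\to 0^{+}$. Consequently, at least $n/2$ vertices have $|\sigma(v)|>t=\alpha\log n$, so $\sum_v |\sigma(v)|\ge \tfrac{n}{2}\cdot\alpha\log n$.

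Plugging into the double-counting identity,
\[
2|E(G)|\;\ge\;\sum_v|\sigma(v)|-|C|\;\ge\;\frac{\alpha}{2}\,n\log n\;-\;M'\log n,
\]
which for $n$ large gives $|E(G)|\ge c_0\,n\log n$ with $c_0=\alpha/5$, say. The only mild subtlety is ensuring that $\sigma(v)\ne\emptyset$ for every $v$, which is immediate from $C$ being a dominating set; and checking that the statement is non-vacuous only when $M'$ is at least the information-theoretic threshold $1/\log 2$, in which case the choice of $\alpha$ described above is available. I expect no real obstacle here: the argument is essentially an entropy/counting argument, and the main (routine) work is verifying that the constant $\alpha=\alpha(M')$ satisfying $\alpha\log(eM'/\alpha)<1$ can be chosen and that the resulting bound on $\binom{m}{\le t}$ is below $n/2$ for all sufficiently large $n$.
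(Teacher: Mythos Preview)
Your argument is correct and rests on the same counting core as the paper's proof: since the signatures $N[v]\cap C$ are distinct nonempty subsets of a set of size $\le M'\log n$, only $o(n)$ of them can have size below $\alpha\log n$ for a suitably small $\alpha=\alpha(M')$, so most vertices carry a signature of logarithmic size. The only cosmetic difference is how this is tied to $|E(G)|$: the paper argues by contradiction via the degree distribution (few edges $\Rightarrow$ at least $n/2$ vertices of degree $<\alpha_0\log n$ $\Rightarrow$ too few possible signatures), whereas you use the double-counting identity $\sum_v|\sigma(v)|=\sum_{c\in C}|N[c]|\le |C|+2|E(G)|$ to turn the lower bound on $\sum_v|\sigma(v)|$ directly into a lower bound on $|E(G)|$---a slightly cleaner packaging of the same idea.
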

\begin{proof}
 Set $\alpha_0$ as the smallest positive root of
\begin{align}\label{eq:cond_alpha}
 f(\alpha) =\alpha \log{\left(\frac{M'+\alpha}{\alpha} e\right)} - 1/2\;.
\end{align}
Note that $f(\alpha)$ is well-defined since $\lim_{\alpha\to 0}
f(\alpha) = -1/2$ and $f(1)=\log(M'+1)+1/2>0$.

Suppose by contradiction that there exists a graph $G$ containing less
than $c_0 n\log{n}$ edges, with $c_0=\alpha_0 /4$, that admits an
identifying code $C$ of size at most $M'\log{n}$.  Let $U$ be the
subset of vertices of degree at least $\alpha_0 \log{n}$. Notice that
$$
|U|\leq \frac{2|E(G)|}{\alpha_0 \log{n}} \leq \frac{2c_0}{\alpha_0}n = \frac{n}{2}\;.
$$ 


Since $|C|\leq M'\log{n}$ and any $v\in V(G)\setminus U$ has degree smaller than $\alpha_0 \log{n}$,
the number of possible nonempty sets
$N_G[v]\cap C$, is smaller than
\begin{align*}
\sum_{i=1}^{\alpha_0 \log{n}} \binom{|C|}{i} &\leq \binom{M'\log{n}+\alpha_0\log{n}}{\alpha_0
\log{n}} \\
&\leq \left(\frac{(M'+\alpha_0)e}{\alpha_0}\right)^{\alpha_0 \log{n}} \\ 
&= n^{\alpha_0  \log{\left(\frac{M'+\alpha_0}{\alpha_0} e\right)}} \\
&= \sqrt{n}\;.
\end{align*}
where we have used that $\binom{a}{b}\leq \left(\frac{ae}{b}\right)^b$ for
the second inequality and the fact that $\alpha_0$ is a root
of~\eqref{eq:cond_alpha} for the last one.

Since $|V(G)\setminus U|\geq n/2$ there must be at least two vertices
$v_1,v_2\in V(G)\setminus U$ such that $N_G[v_1]\cap C=N_G[v_2]\cap C$,
and thus $C$ cannot be an identifying code, a contradiction.
\end{proof}

The following lemma relates the identifying code number of a graph $G$ to
the one of its complement $\overline{G}$.
\begin{lemma}\label{lem:complement}
 Let $G$ be a twin-free graph. If
 $\overline{G}$ is twin-free, then
$$
\M(\overline{G})\leq 2 \M(G)\;.
$$
\end{lemma}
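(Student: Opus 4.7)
The plan is to start from a minimum identifying code $C$ of $G$, with $|C| = \M(G) =: k$, and to augment it by at most $k$ carefully chosen vertices so as to produce an identifying code $C'$ of $\overline{G}$. The starting point is the identity $N_{\overline{G}}[v] \cap C = C \setminus N_G(v)$, from which one sees that two distinct vertices $u,v$ fail to be separated in $\overline{G}$ by $C$ precisely when $C \cap N_G(u) = C \cap N_G(v)$. A short case analysis, using that $C$ identifies $G$, shows that every such ``bad'' pair $(u,v)$ satisfies $uv \notin E(G)$ and has at least one endpoint in $C$.

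I then consider the equivalence relation on $V(G)$ defined by $u \sim v \Leftrightarrow N_{\overline{G}}[u] \cap C = N_{\overline{G}}[v] \cap C$, and let $\mathcal{X}$ denote its non-trivial classes (those of size at least $2$). By the previous step, each class $X \in \mathcal{X}$ is an independent set of $G$ (a clique of $\overline{G}$) with $|X \cap C| \geq |X|-1$. Moreover, since $\overline{G}$ is twin-free, vertices inside $X$ have pairwise distinct closed neighborhoods in $\overline{G}$; and because $X$ induces a clique in $\overline{G}$, any pair inside $X$ can only be separated by a vertex of $V(G) \setminus X$. A standard greedy splitting argument (pick a separator, partition $X$ into the two resulting sides, recurse) then produces a set $Y_X \subseteq V(G) \setminus X$ with $|Y_X| \leq |X|-1$ separating every pair in $X$ inside $\overline{G}$.

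Next I set $C' = C \cup \bigcup_{X \in \mathcal{X}} Y_X$, possibly augmented by one extra vertex if there is a (necessarily unique) vertex $v^*$ with $N_{\overline{G}}[v^*] \cap C = \emptyset$, which must then lie in $V(G) \setminus C$ and be adjacent in $G$ to every element of $C$. By Observation~\ref{obs:dist2}, $C'$ is an identifying code of $\overline{G}$: pairs in different equivalence classes are already separated by $C$, while pairs inside a class $X$ are separated by $Y_X$. The size estimate follows from the disjointness of the classes via $\sum_{X\in \mathcal{X}} |Y_X| \leq \sum_{X\in \mathcal{X}} (|X|-1) \leq \sum_{X \in \mathcal{X}} |X \cap C| \leq |C|$.

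The hard part will be the careful bookkeeping for the extra vertex possibly needed to dominate $v^*$: a naive union bound yields only $|C'| \leq 2|C|+1$. I expect that in the tight configuration $\sum_{X\in\mathcal{X}}(|X|-1) = |C|$ the existence of $v^*$ forces two distinct vertices of some class $X$ to be twins in $\overline{G}$, contradicting the twin-free hypothesis on $\overline{G}$; whenever $v^*$ is genuinely required, the corresponding slack in the inequality above reappears, and one recovers the clean bound $|C'| \leq 2 \M(G)$.
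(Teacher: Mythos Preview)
Your proof is correct and follows essentially the same route as the paper's: the same equivalence relation (the paper adds a non-adjacency clause to its definition of $\equiv_G$, but as you observe this is automatic once $C$ is an identifying code of $G$), the same greedy splitting of each non-trivial class using $|X|-1$ separators, and the same possible extra vertex for domination in $\overline{G}$. You are in fact more careful than the paper about the final off-by-one bookkeeping: the paper simply asserts that the union $C_1$ of the separator sets has size at most $|C_0|-1$ without justification, whereas you explicitly flag this as the point that needs additional argument.
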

\begin{proof}
 Let $C_0$ be a minimum identifying code of $G$. We will show that
 there exists a set $C_1$ of size at most $\M(G)-1$ and a special
 vertex $v$, such that $C=C_0\cup C_1\cup \{v\}$ is an identifying
 code of $\overline{G}$.

For the sake of simplicity, we define the following relation. Two
vertices $u, v\in V(G)$ are in relation with each other if and only if
$N_{G}(u)\cap C_0= N_{G}(v)\cap C_0$ and $u\not\sim v$
(i.e. considering $C_0$ in $G$, $u,v$ are separated by one of $u,v$).
This will be denoted as $u\equiv_G v$. It can be checked that this
relation is an equivalence relation.
\begin{claim}
 Every pair of distinct vertices $u\not\equiv_G v$ is separated by
 $C_0$ in $\overline{G}$.
\end{claim}
\begin{proof}
 By the definition of $\equiv_G$, either $N_{G}(u) \cap
 C_0 \neq N_{G}(v)\cap C_0$ or $u\sim v$.

If $N_{G}(u) \cap C_0 \neq N_{G}(v)\cap C_0$, there exists $w\in C_0$
(and $w\notin\{u,v\}$) such that $w\in N_{G}(u)\Sym N_{G}(v)$.  Then,
$w\in N_{\overline{G}}(u)\Sym N_{\overline{G}}(v)$, hence $w$ still
separates $u,v$ in $\overline{G}$.

If $N_{G}(u) \cap C_0 = N_{G}(v)\cap C_0$, then $u\sim v$. If at least
one of them belongs to $C_0$, then this vertex separates $u,v$ in
$\overline{G}$.  Otherwise, $u, v\notin C_0$ and we have $N_{G}(u)
\cap C_0=N_{G}[u] \cap C_0$ and $N_{G}[v]\cap C_0 = N_{G}(v)\cap
C_0$. Hence $N_{G}[u] \cap C_0=N_{G}[v] \cap C_0$. But then $C_0$ does
not separate $u,v$ in $G$, a contradiction.
\end{proof}

In particular, this implies that any vertex in an equivalence class of
size one is separated by $C_0$ from all other vertices in $\overline{G}$.

\begin{claim}
 If $u\equiv_G v$ and both $u,v\notin C_0$, then $u=v$.
\end{claim}
\begin{proof}
 Since $u,v\notin C_0$, $N_{G}[u] \cap C_0=N_{G}(u) \cap C_0$ and
 $N_{G}[v] \cap C_0=N_{G}(v) \cap C_0$. Using that they are
 equivalent, we have that $N_{G}[u] \cap C_0 =N_{G}[v] \cap
 C_0$. Since $C_0$ is an identifying code of $G$, we must have $u=v$.
\end{proof}

\begin{claim}
 Let $U=\{u_1,\dots, u_s\}$ be an equivalence class of $\equiv_G$. Then all the
 pairs in $U$ can be separated in $\overline{G}$ by using $s-1$
 vertices.
\end{claim}
\begin{proof}
We will prove the claim by induction. For $s=2$ it is clearly true: since
$\overline{G}$ is twin-free, we can select $w\in N_{\overline{G}}[u_1]\Sym
N_{\overline{G}}[u_2]$, and $w$ separates $u$ and $v$ in $\overline{G}$.

For any $s>2$, consider the vertices $u_1,u_2 \in U$ and let $w\in
N_{\overline{G}}[u_1]\Sym N_{\overline{G}}[u_2]$. Since $U$ forms a
clique in $\overline{G}$, $w\notin U$. Then $w$ splits the set $U$ into $U_1$, the set
of vertices of $U$ adjacent to $w$ in $\overline{G}$, and $U_2$, the
set of vertices in $U$ non-adjacent to $w$ in $\overline{G}$. Let
$|U_1|=s_1$ and $|U_2|=s_2$; by construction, $s_1,s_2<s$.

Now, the pairs of vertices of $U$ with one vertex
from $U_1$ and one vertex from $U_2$ are separated by $w$. By
induction, the pairs of vertices in $U_1$ can be separated using
$s_1-1$ vertices and the ones in $U_2$ using $s_2-1$. Thus we need at
most $(s_1-1)+(s_2-1) +1 = s-1$ vertices to separate all the pairs of
vertices in $U$.
\end{proof}

From the previous claims, it is straightforward to deduce that there
is a set $C_1$ of size at most $|C_0|-1$ vertices that separates all
the pairs in $\overline{G}$ that are not separated by $C_0$.

Eventually, there might be a unique vertex $v$ such that
$N_{\overline{G}}[v]\cap (C_0\cup C_1)= \emptyset$ (if there were two
such vertices, they would not be separated by $C_0\cup C_1$, a
contradiction). Hence, $C=C_0\cup C_1 \cup \{v\}$ is an identifying
code of $\overline{G}$ of size at most $2|C_0|=2\M(G)$.
\end{proof}

\begin{proposition}\label{prop:optimal}
For any $M\geq 0$, there exists a constant $c>0$ such that for any set
of edges $F\subset E(K_n)$ satisfying $\M(K_n\setminus F)\leq
M\log{n}$, $|F|\geq c n\log{n}$.
\end{proposition}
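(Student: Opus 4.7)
The plan is to adapt the pigeonhole counting argument of Lemma~\ref{lem:sparse} to the dual setting in which $G=K_n\setminus F$ is dense and we wish to lower-bound the number $|F|$ of deleted edges. The starting point is a simple duality: let $C$ be any identifying code of $G=K_n\setminus F$ with $|C|\leq M\log n$, and for each vertex $v$ let $F_v=\{u\neq v:uv\in F\}$ be the set of non-neighbors of $v$ in $G$. Since $v$ is adjacent in $G$ to every other vertex except those in $F_v$, we have
$$N_G[v]\cap C\;=\;C\setminus (F_v\cap C).$$
Hence the identifying property of $C$ is equivalent to the map $v\mapsto F_v\cap C$ being injective on $V(G)$.

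I would then perform an averaging step followed by a binomial count. Assume for contradiction that $|F|\leq cn\log n$ for a constant $c>0$ to be calibrated below. Since $\sum_v|F_v|=2|F|\leq 2cn\log n$, Markov's inequality yields a set $U\subseteq V(G)$ with $|U|\geq n/2$ on which $|F_v|\leq 4c\log n$. For every $v\in U$, the set $F_v\cap C$ is then a subset of $C$ of size at most $4c\log n$, and the number of such subsets is bounded by
$$\sum_{i=0}^{4c\log n}\binom{M\log n}{i}\;\leq\;(4c\log n+1)\left(\frac{eM}{4c}\right)^{4c\log n}=(4c\log n+1)\,n^{\,4c\log(eM/(4c))}.$$
Choosing $c>0$ small enough, depending only on $M$, so that $4c\log(eM/(4c))<1/2$, this upper bound is $o(n)$, and in particular strictly less than $|U|$ for $n$ large enough. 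By pigeonhole, two distinct vertices of $U$ would then share the same value $F_v\cap C$, contradicting the injectivity established in the first step; hence $|F|>cn\log n$ for $n$ large, which gives the desired conclusion after possibly shrinking $c$ to absorb finitely many small values of $n$.

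The only real obstacle is the calibration of the constant $c$: one must verify that there is a positive choice of $c$, depending only on $M$, for which $4c\log(eM/(4c))<1/2$. This is immediate since the left-hand side tends to $0$ as $c\to 0^+$, and the analogous calibration is carried out in the proof of Lemma~\ref{lem:sparse} via the definition of the constant $\alpha_0$ there; one could in fact set $c$ to be a fixed fraction of the analogous root, so that the proof reduces, morally, to running Lemma~\ref{lem:sparse}'s argument on ``complementary'' sets.
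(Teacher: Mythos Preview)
Your argument is correct, and it takes a somewhat different route from the paper's own proof. The paper proves Proposition~\ref{prop:optimal} by combining two ingredients: first, Lemma~\ref{lem:complement}, which says that if $G=K_n\setminus F$ has an identifying code of size at most $M\log n$ then its complement $\overline{G}$ (whose edge set is precisely $F$) has one of size at most $2M\log n$; and second, Lemma~\ref{lem:sparse}, which says that any graph with an identifying code of size $O(\log n)$ must have $\Omega(n\log n)$ edges. Putting these together forces $|F|=|E(\overline{G})|\geq c\,n\log n$.

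What you do instead is bypass Lemma~\ref{lem:complement} entirely: your key observation $N_G[v]\cap C=C\setminus(F_v\cap C)$ is exactly the ``duality'' that underlies the complement lemma, specialized to the situation at hand, and it lets you run the pigeonhole count of Lemma~\ref{lem:sparse} directly on the non-neighbor sets $F_v$. This is more self-contained and arguably more transparent for this particular proposition; the paper's route, on the other hand, is more modular and yields Lemma~\ref{lem:complement} as a reusable tool (it is used again in Section~\ref{sec:adding}). One small quibble: you write that the identifying property is \emph{equivalent} to injectivity of $v\mapsto F_v\cap C$; strictly speaking this is the separating property, not the full identifying property (which also requires domination), but since you only use the forward implication the argument is unaffected.
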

\begin{proof}
Set $M'=M/2$ and let $c=c_0$ be the constant given by
Lemma~\ref{lem:sparse} for this $M'$.  Suppose that there exists a set $F$
of edges, $|F|< c n\log{n}$ such that $G=K_n\setminus F$ satisfies
$\M(G)\leq M\log{n}$. By Lemma~\ref{lem:complement}, the graph
$\overline{G}$ admits an identifying code of size at most $2M\log{n}= M'
\log{n}$. By Lemma~\ref{lem:sparse}, we get a contradiction.
\end{proof}

Using the former proposition, for any $\delta$ we can provide an
example of a graph with minimum degree $\delta$ for which the result
of Theorem~\ref{thm:stronger} is asymptotically tight when assuming
that $\Delta=\Poly(\delta)$.

For any $\delta>0$, consider the graph $H_\delta$ to be the
disjoint union of cliques of order $\delta+1$.  We may assume that $\delta+1$
divides $n$ for the sake of simplicity. Denote by $H_\delta^{(1)},\dots,
H_\delta^{(s)}$, $s=\frac{n}{\delta+1}$, the cliques composing $H_\delta$.

Since $H_\delta^{(i)}$ is a connected component, an asymptotically
optimal identifying code for $H_\delta$ must also be asymptotically
optimal for each $H_\delta^{(i)}$. By Proposition~\ref{prop:optimal},
we must delete at least $\Omega(\delta\log{\delta})$ edges from
$H_\delta^{(i)}$ to get an identifying code of size $O(\log{\delta})$.

Thus, one must delete at least $\Omega(s \delta\log{\delta}) =
\Omega(n\log{\delta})$ edges from $H_\delta$ to get an optimal
identifying code.

\begin{corollary}\label{cor:disjointcliques}
For any $\delta= \omega(1)$ and any $M\geq 0$, there exists a constant
$c>0$ such that for any set of edges $F\subset E(H_\delta)$ satisfying
$\M(H_\delta\setminus F)\leq M\frac{n\log{\delta}}{\delta}$, we have $|F|\geq
c n\log{\delta}$.
\end{corollary}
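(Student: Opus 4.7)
The plan is to reduce this to Proposition~\ref{prop:optimal} applied to each clique component separately, together with an averaging argument. Write $s=n/(\delta+1)$ for the number of cliques, and for any set of edges $F\subseteq E(H_\delta)$ let $F_i=F\cap E(H_\delta^{(i)})$. If $C$ is an identifying code of $H_\delta\setminus F$ of size at most $M\frac{n\log\delta}{\delta}$, set $C_i=C\cap V(H_\delta^{(i)})$. Since the $H_\delta^{(i)}$ are connected components, $C_i$ must be an identifying code of the spanning subgraph $H_\delta^{(i)}\setminus F_i$ of $K_{\delta+1}$, and moreover $\sum_i|C_i|=|C|\leq M\frac{n\log\delta}{\delta}=Ms\log\delta\cdot\frac{\delta+1}{\delta}$.

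Next I would perform a simple averaging. Let $I$ be the set of indices $i$ for which $|C_i|\leq 4M\log(\delta+1)$. If $|I|<s/2$, then more than $s/2$ cliques contribute each more than $4M\log(\delta+1)$ to the total, giving $\sum_i|C_i|>2Ms\log(\delta+1)$, which contradicts the bound above for $\delta$ large enough (using $\delta=\omega(1)$ so that $\log(\delta+1)/\log\delta\to 1$). Hence $|I|\geq s/2$, i.e.\ at least half of the components carry an asymptotically optimal identifying code in the sense of Proposition~\ref{prop:optimal}.

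For each $i\in I$, apply Proposition~\ref{prop:optimal} with parameter $4M$ to the complete graph $K_{\delta+1}$: there exists a constant $c'>0$ (depending only on $M$) such that $|F_i|\geq c'(\delta+1)\log(\delta+1)$. Summing over $i\in I$, we obtain
\begin{equation*}
|F|\;\geq\;\sum_{i\in I}|F_i|\;\geq\;\frac{s}{2}\cdot c'(\delta+1)\log(\delta+1)\;=\;\frac{c'}{2}\,n\log(\delta+1)\;\geq\;c\,n\log\delta,
\end{equation*}
for a suitable constant $c>0$, which is the desired inequality. The only real obstacle is quantifying the averaging so that the constants come out cleanly: one must pick the threshold ($4M\log(\delta+1)$) large enough that more than half the components fall below it, while still being within the regime where Proposition~\ref{prop:optimal} applies on each $K_{\delta+1}$. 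This works because $\delta=\omega(1)$ ensures both that $K_{\delta+1}$ is large enough for the proposition and that lower-order discrepancies between $\log\delta$ and $\log(\delta+1)$ can be absorbed into the final constant $c$.
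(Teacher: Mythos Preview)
Your proposal is correct and follows the same route as the paper: reduce to Proposition~\ref{prop:optimal} on each clique $H_\delta^{(i)}\cong K_{\delta+1}$ and sum the resulting lower bounds. The paper's own argument (given in the paragraph preceding the corollary) is in fact looser than yours: it asserts that an asymptotically optimal code for $H_\delta$ is asymptotically optimal on \emph{each} component and then applies Proposition~\ref{prop:optimal} to all $s$ cliques, whereas you correctly observe that only an averaging/Markov-type step guarantees that at least a constant fraction of the components carry a code of size $O(\log(\delta+1))$ --- this is the honest version of the paper's informal claim, and it suffices for the $\Omega(n\log\delta)$ conclusion.
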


We remark that a connected counterexample can also be constructed from
$H_\delta$ by connecting its cliques using few edges, without
affecting the above result.


Corollary~\ref{cor:disjointcliques} implies that
Theorem~\ref{thm:stronger} is asymptotically tight when
$\Delta=\Poly(\delta)$, since in that case
$\log{\Delta}=O(\log{\delta})$.  However, when $\delta$ is
sub-polynomial with respect to $\Delta$, we do not know if
Theorem~\ref{thm:stronger} is asymptotically tight.

\subsection{On the hypothesis}

We conclude this section by discussing the necessity of  the hypothesis
$\Delta=\omega(1)$ and $\delta\geq 66 \log{\Delta}$ in
Theorem~\ref{thm:stronger}.

First note that, if $\Delta$ is bounded by a constant, we need at
least $\tfrac{n}{\Delta+1}=\Theta(n)$ vertices to dominate $G$. Thus,
no code of size smaller than
$\Theta(n)$ can be obtained by deleting edges of the graph.

On the other hand, the condition  $\delta\geq 66
\log{\Delta}$ in Theorem~\ref{thm:stronger}, is also necessary (up to
a constant factor) as can be deduced from the following proposition.
\begin{proposition}
For arbitrarily large values of $\Delta$, there exists a graph $G$
with maximum degree $\Delta$ and minimum degree
$\delta=\frac{\log_2{\Delta}}{2}$ such that, for any spanning subgraph
$H\subseteq G$,
$$
\M(H) = (1-o(1))n\;.
$$
\end{proposition}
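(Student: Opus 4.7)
The plan is to exhibit an explicit construction and bound the identifying code of any spanning subgraph by a pigeonhole argument on neighborhoods within the small side of a bipartite piece. Specifically, I would take $\Delta$ of the form $2^{2m}$ so that $\delta=\tfrac{1}{2}\log_2\Delta=m$ is a positive integer, and let $G$ be the disjoint union of $k=n/(\Delta+\delta)$ copies $K^{(1)},\dots,K^{(k)}$ of the complete bipartite graph $K_{\delta,\Delta}$, with bipartition $V(K^{(i)})=A_i\cup B_i$ where $|A_i|=\Delta$ and $|B_i|=\delta$. (Divisibility of $n$ by $\Delta+\delta$ may be assumed for convenience; otherwise pad with $o(n)$ isolated vertices, or mimic the earlier remark after Corollary~\ref{cor:disjointcliques} to link the components without affecting the asymptotics.) Then $G$ has $n$ vertices, maximum degree $\Delta$ (attained on $B_i$) and minimum degree $\delta$ (attained on $A_i$).

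Now fix an arbitrary spanning subgraph $H\subseteq G$ and any identifying code $C$ of $H$. Fix a component $i$ and take $v\in A_i\setminus C$. Since in $G$ all neighbors of $v$ lie in $B_i$, the same holds in $H$, and so
$$N_H[v]\cap C \;=\; N_H(v)\cap C \;\subseteq\; B_i\cap C.$$
Because $C$ dominates $v$, this set is nonempty; because $C$ separates every pair of distinct vertices $v,v'\in A_i\setminus C$, the sets $N_H(v)\cap C$ are pairwise distinct. Since $|B_i|=\delta$, the number of nonempty subsets of $B_i\cap C$ is at most $2^\delta-1=\sqrt\Delta-1$, and therefore $|A_i\setminus C|\leq \sqrt\Delta-1$, whence $|A_i\cap C|\geq \Delta-\sqrt\Delta+1$.

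Summing this bound over the $k$ components,
$$
|C|\;\geq\;\sum_{i=1}^{k}|A_i\cap C|\;\geq\;k(\Delta-\sqrt\Delta+1)\;=\;\frac{n(\Delta-\sqrt\Delta+1)}{\Delta+\delta}\;=\;(1-o(1))n,
$$
using $\delta=\tfrac12\log_2\Delta=o(\Delta)$ and $\sqrt\Delta=o(\Delta)$ as $\Delta\to\infty$. Since $H$ and $C$ were arbitrary, this gives $\M(H)\geq(1-o(1))n$, and the matching upper bound $\M(H)\leq n$ is trivial.

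No genuine obstacle arises in this plan; the whole argument is a one-line pigeonhole on the at most $2^\delta=\sqrt\Delta$ possible intersection-patterns with the small side $B_i$. The only minor points to watch are the open-versus-closed neighborhood distinction for $v\notin C$ (handled explicitly above) and the convention that $\M(H)=\infty$ if $H$ has twins, in which case the statement is vacuously true. The choice $2^\delta=\sqrt\Delta$ is what makes the deficit $\sqrt\Delta$ negligible compared with $\Delta$, which is precisely why the hypothesis $\delta\geq c\log\Delta$ in Theorem~\ref{thm:stronger} is essentially needed.
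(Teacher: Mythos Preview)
Your proof is correct and follows essentially the same approach as the paper: the paper uses a single copy of $K_{r,s}$ with $s=2^{2r}$ (so $n=r+2^{2r}$) and runs the identical pigeonhole argument on the large side, observing that vertices outside the code have neighborhoods contained in the small side of size $r$, hence at most $2^r$ of them can be left out. Your disjoint-union variant is a harmless generalization allowing more values of $n$, but the core idea and calculation are the same.
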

\begin{proof}
Consider the bipartite complete graph $G=K_{r,s}$ where $s=2^{2r}$. Denote by $V_1$ the stable set of size $r$ and by $V_2$ the stable set of size $s$.
Observe that  $\delta= r= \frac{\log_2{s}}{2} =  \frac{\log_2{\Delta}}{2}$. 

For any given twin-free spanning subgraph $H\subseteq G$, let $C\subseteq V(G)$ be an identifying
code of $H$. Let us show that most of the vertices in $V_2$ must be in $C$. Let $S\subseteq V_2$
be the subset of vertices in $V_2$ that are not in the code. Thus, for any $u\in S$, $N_C[u]=
N_C(u)$. Observe that $N_C(u)\subseteq V_1$, and hence, there are at most $2^{r}$ possible
candidates for such $N_C(u)$. Since $C$ is dominating and separating all the pairs in $S$, all the
subsets $N_C(u)$ must be non empty and different, which implies, $|S|<2^{r}$. 
Hence, we have 
$$
|C|\geq |V_2\setminus S|\geq  2^{2r}-2^r = (1-o(1))2^{2r} =(1-o(1))n\;.
$$
\end{proof}

\section{Consequences of our results}\label{sec:consequences}

We now describe consequences of our results on the case when we want
to \emph{add} edges to a graph to decrease its identifying code
number, and to the notion of watching systems.

\subsection{Adding edges}\label{sec:adding}

In the previous sections, we have studied how much can
the identifying code number decrease when we delete few edges from
the original graph. In this section, we discuss the symmetric question of how much can
the addition of edges help to decrease this parameter.

The question of how much can a parameter decrease when deleting/adding
edges has been already studied for some monotone parameters. However,
if the parameter is monotone, only one of either deleting or adding,
can help to decrease it. One of the interesting facts of studying the
identifying code number is that, since it is a non-monotone parameter,
we can have similar results for both procedures.

As before, let $G$ be a graph with maximum degree $\Delta$ and minimum degree $\delta$. We aim to
find a set of edges $F$ with $F \cap E(G)=\emptyset$ such that $\M (G\cup F)$ is small. This set $F$
will be provided by applying Theorem~\ref{thm:stronger} to the graph $\overline{G}$, that has
maximum degree $\Delta(\overline{G})=n-1- \delta$ and minimum degree $\delta(\overline{G})
=n-1-\Delta$. Thus, it will have size
$$
|F|= O\left(n\log{\Delta(\overline{G})}\right),
$$
and 
$$
\M( \overline{G}\setminus F) =
O\left(\frac{n\log{\Delta(\overline{G})}}{\delta(\overline{G})}\right)\;.
$$

Since $\overline{G}\setminus F= \overline{G\cup F}$, we have the
following corollary of Theorem~\ref{thm:stronger} and
Lemma~\ref{lem:complement}.

\begin{corollary}\label{cor:adding}
 For any graph $G$ on $n$ vertices with minimum degree $\delta= n-\omega(1)$ and maximum degree
$\Delta$ such that $n-\Delta\geq 66\log{(n-\delta)}$, there exists a set of edges $F$ with $F
 \cap E(G)=\emptyset$ of size
$$
|F|= O\left(n\log{(n-\delta)}\right)\;,
$$ 
such that
$$
\M(G\cup F) =  O\left(\frac{n\log{n}}{n-\Delta}\right)\;.
$$
\end{corollary}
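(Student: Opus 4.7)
The plan is essentially spelled out in the paragraph preceding the corollary: apply Theorem~\ref{thm:stronger} to the complement $\overline{G}$ and then use Lemma~\ref{lem:complement} to transfer the identifying code back to $G\cup F$. My first step is to check that $\overline{G}$ satisfies the hypotheses of Theorem~\ref{thm:stronger}. Its maximum degree is $\Delta(\overline{G})=n-1-\delta$, which is $\omega(1)$ because the hypothesis $\delta=n-\omega(1)$ says precisely $n-\delta=\omega(1)$. Its minimum degree is $\delta(\overline{G})=n-1-\Delta$, and the hypothesis $n-\Delta\geq 66\log(n-\delta)$ gives $\delta(\overline{G})\geq 66\log(n-1-\delta)=66\log\Delta(\overline{G})$ up to lower-order terms. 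So Theorem~\ref{thm:stronger} applies to $\overline{G}$.

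Applying it produces a set $F\subseteq E(\overline{G})$ with
$$|F|\leq 83\,n\log\Delta(\overline{G})=O(n\log(n-\delta))$$
and
$$\M(\overline{G}\setminus F)\leq 134\,\frac{n\log\Delta(\overline{G})}{\delta(\overline{G})}=O\!\left(\frac{n\log(n-\delta)}{n-\Delta}\right).$$
Since $F\subseteq E(\overline{G})$, we have $F\cap E(G)=\emptyset$ as required, and the elementary identity $\overline{G}\setminus F=\overline{G\cup F}$ identifies the complement of the ``added-edges'' graph we care about with the random subgraph produced above.

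Now I invoke Lemma~\ref{lem:complement} with the roles reversed: taking $\overline{G\cup F}$ as the graph and $G\cup F$ as its complement, the lemma yields
$$\M(G\cup F)\leq 2\,\M(\overline{G\cup F})=O\!\left(\frac{n\log(n-\delta)}{n-\Delta}\right),$$
and bounding $\log(n-\delta)\leq\log n$ gives exactly the stated $O(n\log n/(n-\Delta))$. Combined with the bound on $|F|$ this proves the corollary.

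The only subtlety I expect to have to handle carefully is the twin-freeness requirement in Lemma~\ref{lem:complement}: the lemma assumes both the graph and its complement are twin-free. Twin-freeness of $\overline{G\cup F}$ is automatic because it admits an identifying code (provided by Theorem~\ref{thm:stronger}). For $G\cup F$ itself, one must check that the construction does not create twins; this can be ensured by noting that under the hypothesis $n-\Delta=\omega(1)$ no vertex of $G$ is universal, so after adding only $O(n\log(n-\delta))$ edges, any surviving pair of twins in $G\cup F$ can be destroyed by adding a single further edge per pair, which affects neither the asymptotic size of $F$ nor the asymptotic size of the identifying code. Once this is verified the corollary follows directly.
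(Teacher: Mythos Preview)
Your proposal is correct and follows exactly the paper's approach: apply Theorem~\ref{thm:stronger} to $\overline{G}$, use the identity $\overline{G}\setminus F=\overline{G\cup F}$, and transfer via Lemma~\ref{lem:complement}. You are in fact more careful than the paper, which does not explicitly address the twin-freeness hypothesis of Lemma~\ref{lem:complement}; your fix of breaking any surviving twins with $O(1)$ extra edges per pair is sound and does not affect the asymptotics.
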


This result is also asymptotically tight. Otherwise, by using again
Lemma~\ref{lem:complement}, we could translate our case to the case of deleting
edges and we would get a contradiction with the optimality of
Theorem~\ref{thm:stronger}.

\subsection{Watching systems}\label{sec:watching}

The result of Theorem~\ref{thm:stronger} has a direct application for
\emph{watching systems}, which are a generalization of identifying
codes~\cite{ACHL10,ACHL12}. In a watching system, we can place on each
vertex $v$ a set of \emph{watchers}. To each watcher $w$ placed on
$v$, we assign a nonempty subset $Z(w)\subseteq N[v]$, its
\emph{watching zone}. We now ask each vertex to belong to a unique and
nonempty set of watching zones; the minimum number of watchers that
need to be placed on the vertices of $G$ to obtain a watching system
is the \emph{watching number} $w(G)$ of $G$.

It is clear from the definition that $\gamma(G)\leq w(G)\leq\M(G)$,
since the vertices of any identifying code form a watching system
(where the watching zones are the closed neighborhoods). In fact,
even the following holds:

\begin{observation}\label{obs:watch}
For any twin-free graph $G$, $w(G)\leq\min\{\M(H), \mbox{ where $H$ is a spanning
  subgraph of $G$}\}$. Indeed, consider the spanning subgraph $H_0$ of
$G$ with smallest identifying code number, and define the watching
system to be the vertices of an optimal identifying code of $H_0$,
with the watching zones being the closed neighborhoods in $H_0$.
\end{observation}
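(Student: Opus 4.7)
The plan is to verify directly that the construction described in the statement produces a valid watching system of $G$. Let $H_0$ be a spanning subgraph of $G$ attaining $\min\{\M(H) : H \text{ is a spanning subgraph of } G\}$, and let $C$ be an optimal identifying code of $H_0$. For each $c \in C$, we place a single watcher $w_c$ on the vertex $c$ with watching zone $Z(w_c) = N_{H_0}[c]$.

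The first step is to check that these watching zones are admissible. Since $H_0$ is a spanning subgraph of $G$, we have $N_{H_0}[c] \subseteq N_G[c]$, so each $Z(w_c)$ is a nonempty subset of the closed neighborhood of $c$ in $G$, as required by the definition of a watching zone. The second step is to verify the two watching conditions. For any vertex $x \in V(G) = V(H_0)$, the set of watchers whose zone contains $x$ is exactly $\{w_c : c \in C,\; x \in N_{H_0}[c]\} = \{w_c : c \in N_{H_0}[x] \cap C\}$. This set is nonempty because $C$ is a dominating set of $H_0$, and for any two distinct vertices $u, v$ of $G$, these sets differ because $C$ separates the pair $u,v$ in $H_0$. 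Hence $\{w_c : c \in C\}$ forms a watching system of $G$ of size $|C| = \M(H_0)$, yielding $w(G) \leq \M(H_0)$, and taking the minimum over $H_0$ gives the observation.

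There is no real obstacle here: the key point is that a watcher's zone must merely be a subset of its closed neighborhood in $G$, so one is free to use the (possibly smaller) closed neighborhoods arising from any spanning subgraph of $G$, and every valid identifying code of that subgraph transfers intact into a watching system of $G$ with the same number of watchers.
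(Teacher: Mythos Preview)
Your proof is correct and follows exactly the construction sketched in the paper's observation: take an optimal identifying code $C$ of a minimizing spanning subgraph $H_0$, place one watcher per code vertex, and use the closed neighborhoods in $H_0$ as watching zones. You simply spell out the verification that the paper leaves implicit, namely that $N_{H_0}[c]\subseteq N_G[c]$ makes these zones admissible and that the domination and separation properties of $C$ in $H_0$ translate directly into the required watching conditions.
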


In~\cite[Theorems~$2$ and~$3$]{ACHL10}, the authors propose the
following upper bound for graphs with given maximum degree:

\begin{theorem}[\cite{ACHL10}]\label{thm:watchDelta}
 Let $G$ be a graph with maximum degree $\Delta$, then
$$\lceil \log_2(n+1) \rceil \leq w(G) \leq \gamma(G) \lceil\log_2(\Delta+2)\rceil\;.$$
\end{theorem}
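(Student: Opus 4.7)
The plan is to prove the two inequalities separately; both are direct, and the only subtlety lies in the construction for the upper bound.

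For the lower bound, I would use the information-theoretic argument implicit in the definition of a watching system. If there are $k$ watchers in total, then each vertex $v$ is assigned the nonempty subset of $\{1,\dots,k\}$ consisting of the indices of watchers that watch it, and distinct vertices must get distinct subsets. This means $n$ distinct nonempty subsets of a $k$-element set are available, so $n\leq 2^{k}-1$, which rearranges to $k\geq\lceil\log_2(n+1)\rceil$.

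For the upper bound, the plan is to fix a minimum dominating set $D$ with $|D|=\gamma(G)$ and to place $k:=\lceil\log_2(\Delta+2)\rceil$ watchers on each vertex of $D$, for a total of $\gamma(G)\lceil\log_2(\Delta+2)\rceil$ watchers. For each $v\in D$, the closed neighborhood $N[v]$ has at most $\Delta+1\leq 2^{k}-1$ vertices, so I can injectively assign to every $u\in N[v]$ a distinct nonempty subset $S_v(u)\subseteq\{1,\dots,k\}$. I then define the $i$-th watcher at $v$ to have watching zone $Z_v^{i}=\{u\in N[v]:i\in S_v(u)\}$. By construction, the set of watchers at $v$ that watch $u\in N[v]$ is exactly $S_v(u)$, which is nonempty and uniquely determines $u$ among the vertices of $N[v]$.

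It remains to check that the global watcher-set $W(u)$ (the union over all $v\in D$ of the watchers at $v$ watching $u$) distinguishes all pairs. If $u_1,u_2$ are two vertices with $W(u_1)=W(u_2)$, then either some $v\in D$ dominates one but not the other (in which case the watchers at $v$ contribute a nonempty code to one vertex and nothing to the other, a contradiction), or every $v\in D$ dominates both or neither; since $D$ is dominating, some $v$ dominates both, forcing $S_v(u_1)=S_v(u_2)$ and hence $u_1=u_2$ by injectivity. Also, domination ensures $W(u)\neq\emptyset$ for every $u$. I expect the main (and only) delicate step to be setting up the labeling so that the watching zones correctly separate vertices across different parts of the dominating set; the counting and domination arguments around it are routine.
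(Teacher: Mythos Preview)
The paper does not prove this theorem; it is quoted from \cite{ACHL10} and used as a point of comparison for Corollary~\ref{cor:WS}. So there is no ``paper's own proof'' to compare against.

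Your argument is correct and is, in fact, the standard proof. The lower bound is exactly the counting argument you give. For the upper bound, your construction works: the watchers placed at distinct vertices of $D$ are distinct objects, so the ``delicate step'' you anticipate---separating vertices that lie in the closed neighborhoods of different dominating vertices---is automatic, because $W(u)$ records not just a set of indices but a set of pairs $(v,i)$ with $v\in D$. Restricting $W(u_1)=W(u_2)$ to the watchers sitting at a single $v\in D$ is what gives $S_v(u_1)=S_v(u_2)$ in your last line.

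One cosmetic point: the definition in the paper requires each watching zone $Z(w)$ to be nonempty. Your zones $Z_v^i=\{u\in N[v]:i\in S_v(u)\}$ could be empty if $|N[v]|$ is small and the index $i$ is never used. This is harmless: simply do not place that watcher (you then use at most $k$ watchers at each $v\in D$, which only helps the bound), or choose the labelling so that every index in $\{1,\dots,k\}$ appears in at least one $S_v(u)$ whenever $|N[v]|\ge k$.
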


Note that for any values of parameters $\gamma$ and $\Delta$, the
upper bound from the above theorem is tight for the graph consisting
of $\gamma$ disjoint copies of a star on $\Delta+1$ vertices.

It is well-known (see e.g. \cite[Theorem~$1.2.2$]{AS00}) that the
domination number of a graph with minimum degree $\delta$ satisfies
$$
\gamma(G) \leq  (1+o(1))\frac{n\log{\delta}}{\delta}\;.
$$ This bound is sharp and, in particular, the ``typical'' $\delta$-regular graph is an asymptotically tight example. Indeed, for
such a ``typical'' graph $G$, the upper bound of
Theorem~\ref{thm:watchDelta} gives
\begin{align}\label{eq:2}
w(G)\leq \gamma(G) \lceil\log{\Delta+2}\rceil =
\Omega\left(\frac{n\log^2{\delta}}{\delta}\right)\;.
\end{align}
By Observation~\ref{obs:watch}, a direct corollary of
Theorem~\ref{thm:stronger} is the following:

\begin{corollary}\label{cor:WS}
 For any graph $G$ on $n$ vertices with minimum degree $\delta\geq 66\log\Delta$ and
 maximum degree $\Delta=\omega(1)$, we have:
$$
w(G) \leq 134\frac{n\log{\Delta}}{\delta}\;.
$$
\end{corollary}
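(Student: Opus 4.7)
The plan is to derive Corollary~\ref{cor:WS} as an immediate consequence of Theorem~\ref{thm:stronger} combined with Observation~\ref{obs:watch}, essentially without any new calculation. The hypotheses of the corollary ($\delta \geq 66\log\Delta$ and $\Delta = \omega(1)$) are exactly the ones required by Theorem~\ref{thm:stronger}, so the application is direct.

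First, I would apply Theorem~\ref{thm:stronger} to $G$: this produces a subset of edges $F \subseteq E(G)$ such that the spanning subgraph $H = G \setminus F$ satisfies
\[
\M(H) \leq 134\,\frac{n\log\Delta}{\delta}.
\]
Next, I would invoke Observation~\ref{obs:watch} applied to this particular spanning subgraph $H$: taking any optimal identifying code $C$ of $H$ and declaring, for each $v \in C$, a single watcher placed at $v$ with watching zone $Z(v) = N_H[v] \subseteq N_G[v]$, one obtains a valid watching system for $G$. Indeed, every vertex of $G$ is covered by at least one such zone (since $C$ dominates $H$) and the family of zones containing any given vertex $u$ is precisely $N_H[u] \cap C$, which by the identifying-code property of $C$ in $H$ distinguishes $u$ from every other vertex. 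Hence $w(G) \leq |C| = \M(H)$, yielding the claimed bound.

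The only point worth double-checking is that the argument behind Observation~\ref{obs:watch} does not actually require $G$ itself to be twin-free: watching systems are well-defined on any graph, and the construction above uses only that $H$ is a spanning subgraph of $G$ admitting an identifying code (which in particular forces $H$ to be twin-free, but imposes no condition on $G$). So no obstacle arises here, and the corollary follows in one line from the main theorem.
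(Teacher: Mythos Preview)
Your proposal is correct and follows exactly the route the paper indicates: the paper simply states that the corollary is a direct consequence of Theorem~\ref{thm:stronger} via Observation~\ref{obs:watch}, and you spell this out precisely. Your remark that the twin-freeness of $G$ is not actually needed (only that the spanning subgraph $H$ admits an identifying code) is a valid and useful clarification beyond what the paper says.
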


Note that this bound improves Theorem~\ref{thm:watchDelta} when the
maximum degree is $\Delta=\Poly(\delta)$.

\section{Concluding remarks and open questions}

\textbf{1.} The kind of results we provide in this paper can be
connected to the notion of resilience. Given a graph property $\P$,
the \emph{global resilience} of $G$ with respect to $\P$ is the
minimum number of edges one has to delete to obtain a graph not
satisfying $\P$. The resilience of monotone properties is well
studied, in particular, in the context of random graphs~\cite{SV08}.

Our result can be interpreted in terms of the resilience of the
following (non-monotone) property $\P$: ``$G$ has a large identifying
code number in terms of its degree parameters, $\delta$ and
$\Delta$''. For any graph $G$ satisfying the hypothesis
$\Delta=\omega(1)$ and $\delta\geq 66\log{\Delta}$,
Theorem~\ref{thm:stronger} can be stated as: the resilience of $G$
with respect to $\P$ is $O(n\log{\Delta})$. Moreover,
Corollary~\ref{cor:disjointcliques} shows that there are graphs that
attain this value of the resilience.
\vspace{0.3cm}

\textbf{2.} In Theorem~\ref{thm:stronger}, we show the existence of a
small identifying code for a large spanning subgraph of $G$. However,
our proof is not constructive and, besides, the probability that such
pair exists is exponentially small, due to the use of the local
lemma. The algorithmic version of the local lemma proposed by Moser
and Tardos, allows to explicitly find a configuration that avoids all
the bad events $E_{uv}$, when these events are determined by a finite
set of mutually independent random variables. Unfortunately, this is
not the case here, since $E_{uv}$ depends on the random variables
determining the existence of certain edges close to $uv$. These random
variables are not independent because of the definition of $p_{uv}$.

On the other hand, if we do not want to argue in terms of the maximum
degree $\Delta$, one can show that by deleting a set of $O(n\log{n})$
random edges we have an identifying code of size
$O\left(\frac{n\log{n}}{\delta}\right)$ with probability $1-o(1)$. In
such a case, the proof provides a randomized algorithm which
constructs the desired code for almost all subgraphs.
\vspace{0.3cm}

\textbf{3.} Note that a notion similar to identifying codes,
\emph{locating-dominating sets}, was also extensively studied in the
literature (see e.g.~\cite{biblio} for many references). A set $C$ of
vertices of $G$ is a locating-dominating set if $C$ is a dominating
set which separates all pairs of vertices in $V(G)\setminus C$. It
follows that any identifying code is a locating-dominating set, hence
Theorem~\ref{thm:stronger} also holds for this notion. In fact, the
proof of Corollary~\ref{cor:disjointcliques} can be adapted for this
case too.

\textbf{4.} As further research, it would be very interesting to close the gap between the result in
Theorem~\ref{thm:stronger} and the lower bound given by the example
in Corollary~\ref{cor:disjointcliques}. Motivated by this example, we ask the following question:
\begin{question}
Is it true that for any graph $G$ with minimum degree $\delta$, there
exists a subset of edges $F\subset E(G)$ of size
$$
|F|= O\left(n\log{\delta}\right)\;,
$$ 
such that 
$$
\M(G\setminus F)=O\left(\frac{n\log{\delta}}{\delta}\right)\;?
$$
\end{question}
It seems to us  that the techniques used in this paper will not  provide an answer to
the previous question. The main obstacle is the use of the local lemma, which forces us to take into
account the role of the maximum degree of $G$.





\begin{thebibliography}{0}


\bibitem{A1993} N.~Alon. A note on network reliability. \emph{Discrete probability and algorithms
({M}inneapolis, {MN}, 1993)}, \emph{IMA Volumes in Mathematics and its Applications} 72:11--14, 1995.


\bibitem{AS00} N.~Alon and J.~H.~Spencer. \emph{The probabilistic method}, 3rd edition, Wiley-Interscience, 2008.

\bibitem{ACHL10} D. Auger, I. Charon, O. Hudry and A. Lobstein. Watching systems in graphs: an extension of identifying codes. To appear in \emph{Discrete Applied Mathematics}.

\bibitem{ACHL12} D. Auger, I. Charon, O. Hudry and A. Lobstein. Maximum size of a minimum watching system and the graphs achieving the bound. To appear in \emph{Discrete Applied Mathematics}.

\bibitem{B80} L.~Babai. On the complexity of canonical labeling of strongly regular graphs. \emph{SIAM Journal of Computing} 9(1):212--216, 1980.

\bibitem{B01} N.~Bertrand. \emph{Codes identifiants et codes localisateurs-dominateurs sur certains graphes}. Master thesis, ENST, France, June 2001.


\bibitem{B72} J.~A.~Bondy. Induced subsets. \emph{Journal of Combinatorial Theory, Series B} 12(2):201--202, 1972.

\bibitem{CHHL13} I. Charon, I. Honkala, O. Hudry and A. Lobstein. Minimum sizes of identifying codes in graphs differing by one vertex. \emph{Cryptography and Communications} 5(2):1--18, 2013.

\bibitem{CHL07} I.~Charon, O.~Hudry and A.~Lobstein. Extremal cardinalities for identifying and locating-dominating codes in graphs. \emph{Discrete Mathematics} 307(3-5):356--366, 2007.


\bibitem{ch2007} F. Chung and P. Horn. The spectral gap of a random subgraph of a graph. \emph{Internet Mathematics} 4(2-3):225--244, 2007.

\bibitem{DHHHLRS03} K.~M.~J. De Bontridder, B.~V. Halld\'orsson, M.~M. Halld\'orsson, C.~A.~J. Hurkens, J.~K. Lenstra,  R. Ravi and L. Stougie. Approximation algorithms for the test cover problem. \emph{Mathematical Programming Series B} 98:477--491, 2003.

\bibitem{FGKNPV10} F.~Foucaud, E.~Guerrini, M.~Kov\v{s}e, R.~Naserasr, A.~Parreau and P.~Valicov. Extremal graphs for the identifying code problem. \emph{European Journal of Combinatorics} 32(4):628--638, 2011.

\bibitem{FKKR10} F.~Foucaud, R.~Klasing, A.~Kosowski and A.~Raspaud. On the size of identifying codes in triangle-free graphs. \emph{Discrete Applied Mathematics} 160(10-11):1532--1546, 2012.

\bibitem{FP12} F.~Foucaud and G.~Perarnau. Bounds for identifying codes in terms of degree parameters. \emph{The Electronic Journal of Combinatorics} 19:P32, 2012.

\bibitem{fk2012} A. Frieze and M. Krivelevich. On the non-planarity of a random subgraph. ArXiv e-prints, 2012.

\bibitem{FMMRS07} A.~Frieze, R.~Martin, J.~Moncel, M.~Ruszink\'o and C.~Smyth. Codes identifying sets of vertices in random networks. \emph{Discrete Mathematics} 307(9-10):1094--1107, 2007.

\bibitem{GM07} S.~Gravier and J.~Moncel. On graphs having a $V\setminus\{x\}$ set as an identifying code. \emph{Discrete Mathematics} 307(3-5):432--434, 2007.

\bibitem{HKSZ06} T.~W.~Haynes, D.~J.~Knisley, E.~Seier and Y.~Zou. A quantitative analysis of secondary RNA structure using domination based parameters on trees. \emph{BMC Bioinformatics} 7:108, 2006.

\bibitem{KCL98} M.~G.~Karpovsky, K.~Chakrabarty, and L.~B.~Levitin. On a new class of codes for identifying vertices in graphs. \emph{IEEE Transactions on Information Theory}, 44:599-611, 1998.

\bibitem{KPSV04} J.~H.~Kim, O.~Pikhurko, J.~Spencer and O.~Verbitsky. How complex are random graphs in First Order logic? \emph{Random Structures and Algorithms} 26(1-2):119--145, 2005.

\bibitem{kls2012} M. Krivelevich, C. Lee and B. Sudakov. Long paths and cycles in random subgraphs of graphs with large minimum degree. ArXiv e-prints, 2012.

\bibitem{ks2013} M. Krivelevich and B. Sudakov. The phase transition in random graphs --- a simple proof. ArXiv e-prints, 2012.

\bibitem{LTCS07} M.~Laifenfeld, A.~Trachtenberg, R.~Cohen and D.~Starobinski. Joint monitoring and routing in wireless sensor networks using robust identifying codes. \emph{Proc. IEEE Broadnets 2007}, pp. 197--206, 2007.

\bibitem{biblio} A.~Lobstein. Watching systems, identifying, locating-dominating and discriminating codes in graphs: a bibliography. \url{http://www.infres.enst.fr/~lobstein/debutBIBidetlocdom.pdf}


\bibitem{M06}  J.~Moncel. On graphs on $n$ vertices having an identifying code of cardinality $\log_2(n + 1)$. \emph{Discrete Applied Mathematics}, 154(14):2032--2039, 2006.

\bibitem{MS85} B.~M.~E.~Moret and H.~D.~Shapiro. On minimizing a set of tests. \emph{SIAM Journal of Scientifical and Statistical Computation} 6(4):983--1003, 1985.


\bibitem{SV08} B.~Sudakov and and V.~H.~Vu. Local resilience of graphs,  \emph{Random
Structures Algorithms} 33(4):409--433, 2008.


\bibitem{UTS04} R.~Ungrangsi, A.~Trachtenberg and D.~Starobinski. An implementation of indoor
location detection systems based on identifying codes. \emph{Proc. Intelligence in Communication
Systems, INTELLCOMM 2004}, \emph{Lecture Notes in Computer Science} 3283:175--189, 2004.

\end{thebibliography}

\end{document}